\documentclass[12pt]{article}
\usepackage{amsmath,amsthm,amssymb, amstext, amsopn, amsxtra}

\usepackage{latexsym}
\usepackage{amsfonts}
\setlength{\textheight}{9.1in}
\setlength{\headheight}{-0.5in}
\setlength{\oddsidemargin}{0.3in}
\setlength{\textwidth}{6.2in}
\usepackage{amsmath}
\usepackage{amsthm}
\usepackage{amsfonts}
\usepackage{amssymb}
\usepackage{amscd}
\usepackage{cite}
\usepackage{xcolor}

\newcommand\blfootnote[1]{%
	\begingroup
	\renewcommand\thefootnote{}\footnote{#1}%
	\addtocounter{footnote}{-1}%
	\endgroup
}

% GRAPHICS--------------------------------------------------------
\usepackage{graphicx}

% THEOREMS -------------------------------------------------------
\theoremstyle{plain}
\newtheorem{thm}{Theorem}[section]
\newtheorem{lemma}[thm]{Lemma}
\newtheorem{cor}[thm]{Corollary}
\newtheorem{prop}[thm]{Proposition}
\newtheorem*{alt-thm}{Theorem}
\newtheorem*{alt-cor}{Theorem}

\theoremstyle{definition}
\newtheorem{definition}[thm]{Definition}

\newtheorem{question}[thm]{Question}
\newtheorem*{remark}{Remark}
\makeatletter
\renewenvironment{proof}[1][\proofname]{%
	\par\pushQED{\qed}%
	\normalfont % ensures upright text, not italic
	\topsep6\p@\@plus6\p@\relax
	\trivlist
	\item[\hskip\labelsep\bfseries #1\@addpunct{.}]%
}{%
	\popQED\endtrivlist\@endpefalse
}
\makeatother

\renewcommand{\proofname}{Proof}

% MATH -----------------------------------------------------------

\begin{document}

	\begin{center}
		\Large{\bf Rings in which one-sided strongly $\pi$-regular elements are strongly $\pi$-regular }
		
		\bigskip
		\large{Dimple Rani Goyal\footnote{The work of the first author is supported by a UGC grant and will form a part of her Ph.D. dissertation
				under the supervision of the second author.} and Dinesh Khurana\footnote{The work of the second author is supported by DST FIST Grant grant SR/FIST/MS-II/2019/43 dated 7-1-2020.}}
		
	\end{center}
	
	\vspace{2mm}

	\begin{abstract}
		{\footnotesize In 1977, Hartwig and Luh asked: If $a$ is an element in a Dedekind-finite ring $S$, then does $aS = a^2S$ imply $Sa = Sa^2$? This question was answered negatively by Dittmer, Khurana, and Nielsen in 2014. On the other hand, Dittmer et al.\ proved that the question of Hartwig and Luh has a positive answer for Dedekind-finite exchange rings. We explore the question of Hartwig and Luh for various other classes of Dedekind-finite rings. We will also prove that the condition in question is not left-right symmetric.}
	\end{abstract}
	\blfootnote{MSC Primary: 16E50, Secondary 16U99}\\
	\blfootnote{Keywords: Left strongly $\pi$-regular elements, Right strongly $\pi$-regular elements, Strongly $\pi$-regular elements, Dedekind-finite rings, Right Dischinger rings, Left Goldie rings, UU rings, Suitable elements, Exchange elements}

	\vspace{-5mm}
	
	\section{Introduction}
	 Kaplansky asked the following question in his 1950 paper \cite{KP}:\\
	 
	 \noindent {\em It is natural to ask what can be deduced from just the assumption $a^{n+1} x = a^n$. For example, does it enable one to construct idempotents?}\\
	 
	 \par Following the literature, an element $a$ in a ring $R$ is \emph{right strongly $\pi$-regular}, if there exists some $x \in R$ such that $a^n= a^{n+1}x$ for some integer $n \geq 1$.\ Kaplansky was interested in this condition because in rings with identity, this definition is equivalent to termination of the descending chain of principal right ideals $aR\supseteq a^2R\supseteq a^3R\supseteq \ldots$. Similarly, \emph{left strongly $\pi$-regular} elements are defined. Moreover, if an element $a$ in a ring $R$ is both left and right strongly $\pi$-regular, then $a$ is \emph{strongly $\pi$-regular}.\\
	 
	 On the other hand if $a = a^2x$ for $a,\,x \in R$, then $a$ is said to be \emph{right strongly regular}. The \emph{left strongly regular} elements are defined similarly. An element $a \in R$ which is both left and right strongly regular is called \emph{strongly regular}. It is clear that if $a$ is strongly regular, then $R = aR \oplus r_R(a)$ implying that every strongly regular element is (von Neumann) regular. It is easy to see that an element is strongly $\pi$-regular iff its some power is strongly regular. As it is common in ring theory to add $\pi$ when talking about conditions that hold for some power, it would have been more appropriate to call strongly $\pi$-regular element as $\pi$-strongly regular. \\
	 
	 It is quite easy to see that a strongly $\pi$-regular element leads to an idempotent, as desired by Kaplansky in the above question, as its some power is regular. But the  Kaplansky's question was for rings whose every element is right strongly $\pi$-regular.
	When Kaplansky raised this question, it was not known that if all elements of a ring are right strongly $\pi$-regular, then all its elements are also left strongly $\pi$-regular. This amazing result was proved about $25$ years later by Dischinger \cite{Dischinger} in 1976. This result of Dischinger \cite{Dischinger} leads to a very satisfactory answer to the question raised by Kaplansky.\\
	
	 However, this result of Dischinger does not hold locally{\footnote{The term “locally’’ is used in the sense of considering the result element-wise}}, and a right strongly $\pi$-regular element may not be left strongly $\pi$-regular. For instance, if $R$ is not Dedekind finite and has two elements $a, \,b$ such that $ab = 1$ but $ba \neq 1$, then $a$ is right strongly $\pi$-regular but not left strongly $\pi$-regular. On the other hand, Azumaya \cite[Theorem 4]{Az} proved that if $R$ has a bounded index of nilpotence, then every one-sided strongly $\pi$-regular element is strongly $\pi$-regular. Azumaya \cite[Lemma 3]{Az} also proved that if $a \in R$ is strongly $\pi$-regular with $a^n = a^{n+1}x$ and $a^m = ya^{m+1}$ for some $x,\,y \in R$ and $n,\,m \in \mathbb{N}$, then $a^m = a^{m+1}x$ and $a^n = ya^{n+1}$ also. In view of this, the following result follows.
	
	\begin{prop}\label{D.Def}
		For a ring $R$, the following conditions are equivalent.\\[2mm]
		\textup{(1)} Every right strongly $\pi$-regular element of $R$ is strongly $\pi$-regular element.  \\[2mm]
		\textup{(2)} Every right strongly regular element of $R$ is strongly regular.
	\end{prop}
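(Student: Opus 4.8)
The plan is to derive both implications from two facts already recorded above: Azumaya's Lemma 3, which says that for a strongly $\pi$-regular element an exponent that works on one side also works on the other \emph{with the same witness}; and the elementary observation that $a^n = a^{n+1}x$ propagates, upon left-multiplying repeatedly by $a$, to $a^n = a^{n+k}x^k$ for every $k \ge 1$. Neither direction needs a new idea; the whole content is bookkeeping with exponents.

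For $(1)\Rightarrow(2)$, suppose every right strongly $\pi$-regular element of $R$ is strongly $\pi$-regular, and let $a$ be right strongly regular, say $a = a^2 x$. Reading this as $a^1 = a^{1+1}x$ shows $a$ is right strongly $\pi$-regular, so by $(1)$ it is strongly $\pi$-regular; in particular $a^m = y a^{m+1}$ for some $y \in R$ and $m \in \N$. Applying Azumaya's Lemma 3 to $a$, with the right-hand data $a^1 = a^{1+1}x$ and the left-hand data $a^m = y a^{m+1}$, yields $a^1 = y a^{1+1}$, i.e.\ $a = y a^2$; hence $a$ is left strongly regular as well, so it is strongly regular.

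For $(2)\Rightarrow(1)$, suppose every right strongly regular element of $R$ is strongly regular, and let $a$ be right strongly $\pi$-regular, say $a^n = a^{n+1}x$. Iterating as above gives $a^n = a^{2n}x^n = (a^n)^2 x^n$, so $b := a^n$ is right strongly regular; by $(2)$ it is strongly regular, and in particular left strongly regular, say $b = w b^2$, i.e.\ $a^n = w a^{2n}$. Since $2n = (n-1)+(n+1)$, this reads $a^n = (w a^{n-1})\,a^{n+1}$ (with $w a^{n-1}$ understood as $w$ when $n=1$), so $a$ is left strongly $\pi$-regular with the same exponent $n$; together with $a^n = a^{n+1}x$ this makes $a$ strongly $\pi$-regular.

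Since everything reduces to moving powers of $a$ around, there is no real obstacle. The only points demanding a little care are: checking that the exponent $1$ is genuinely admissible in the definition of ``right strongly $\pi$-regular'', so that $(1)$ applies to the right strongly regular element $a$; invoking Azumaya's Lemma 3 in the correct direction, with $n=1$ on the right-hand side, so that its conclusion is precisely the left strong regularity of $a$; and treating the case $n=1$ in $(2)\Rightarrow(1)$ by hand, since $R$ is not assumed unital and ``$a^0$'' is not available there.
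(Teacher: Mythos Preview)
Your proof is correct and follows essentially the same approach as the paper, which simply records that the proposition ``follows'' from Azumaya's \cite[Lemma~3]{Az} without spelling out the details. You have supplied exactly those details: $(1)\Rightarrow(2)$ is a direct application of Azumaya's lemma with $n=1$ on the right-hand side, and $(2)\Rightarrow(1)$ is the standard reduction to the power $a^n$ via $a^n=a^{2n}x^n$, both of which the paper leaves implicit.
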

\begin{proof} Suppose $(1)$ holds and $a \in R$ such that $a=a^2x$, then by assumption, $a$ is strongly $\pi$-regular element i.e., there exists an element $y \in R$ such that $a^n=ya^{n+1}$ for some integer $n \geq 1$. By \cite[Lemma 3]{Az}, we have $a=ya^2$.\\
 Next, suppose $(2)$ holds and assume that  $a \in R$ such that $a^n=a^{n+1}x$ for some integer $n \geq 1$ and $x \in R$. Since $a^n=a^{n+1}x=a^{2n}x^n$, thus by assumption, $a^n$ is strongly regular. This implies that there exists an $y \in R$ such that $a^n=ya^{2n}$ for some integer $n \geq 1$. Therefore, $a$ is strongly $\pi$-regular. 
\end{proof}
	\begin{definition}
		We call a ring $R$ to be {\em right Dischinger} if it satisfies either of the equivalent conditions of Proposition \ref{D.Def}. {\em Left Dischinger} rings are defined similarly. A ring which is both left and right Dischinger will be simply called a {\em Dischinger ring}. In other words, Dischinger rings are those in which Dischinger's result also holds locally. 
	\end{definition} 

It is well known that Drazin-invertible elements are precisely the strongly $\pi$-regular elements. Thus in a right Dischinger ring every right strongly $\pi$-regular element is Drazin invertible. Note that the definition of one-sided Dischinger rings is still valid for rings without identity. Throughout this paper, all rings are assumed to be associative but may not have an identity. However, if $R$ possesses an identity, then it will be denoted by $1$.
	It is clear that if $R$ is a one-sided Dischinger ring with identity, then $R$ must be Dedekind-finite.\ Hartwig and Luh \cite[Page 94]{HartLuh} in 1977 asked whether a Dedekind-finite ring is right Dischinger. A strong negative answer to this question was given by Dittmer, Khurana, and Pace in \cite[Theorem 2.3]{Dit} by constructing an example of a ring with only trivial idempotents that is not right Dischinger.\ To be specific, it was proved in  \cite[Theorem 2.3]{Dit}  that the ring 
	\begin{equation}
		R = F\langle a,x :  a= a^2x\rangle, \tag{1.3}
	\end{equation}  
	where $F$ is a commutative ring with trivial idempotents, has only trivial idempotents. On the positive side, it was proved in \cite[Corollary 3.5]{Dit} that a Dedekind-finite exchange ring is a Dischinger ring.\\
	\par Now, since it is known that a Dedekind-finite ring may not be right Dischinger, but a Dedekind-finite exchange ring is Dischinger, it is natural to ask which other classes of Dedekind rings are (one-sided) Dischinger.\\
\par It is well-known that if in a ring $R$ there exist $a$ and $b$ such that $ab=1$ but $ba \neq 1$, then $ E_{ij}=b^i(1-ba)a^j$, where $i,j \in \mathbb{N}$ is an infinite set of matrix units. In view of this, the rings in which nilpotent elements satisfy certain properties, such as, nilpotent elements form an ideal (such rings are called NI) or a subring (such rings are called NR), are always Dedekind-finite. Azumaya in \cite[Theorem 4]{Az} already proved that if $R$ has a bounded index of nilpotence, then every right strongly regular element is strongly regular. It is quite interesting to see under which conditions on the set of nilpotent elements one-sided strongly $\pi$-regular elements becomes strongly $\pi$-regular. Also, almost a decade ago, through private communication, Nielsen asked one of the authors whether Dischinger's result holds locally in NI rings or more generally, in rings in which the set of nilpotent elements is closed under addition or ring is right linearly McCoy. Also, Azumaya (see \cite[Theorem 2]{Az}) proved that a right Noetherian ring is Dischinger. In light of these considerations, we may pose this question in well-known classes of Dedekind-finite rings: weakly semicommutative, one-sided duo, one-sided Goldie, one-sided Noetherian, NI, etc. We also generalize the result of Azumaya \cite[Theorem 4]{Az} that rings with bounded index of nilpotence are Dischinger in Theorem \ref{D.GenAzumaya}.  Also, by characterizing units, nilpotents, zero-divisors, etc.\ in the ring appearing in (1.3), we can show that a large number of properties do not yield the Dischinger condition, such as, NR, UU (rings in which units are unipotent), one-sided linearly McCoy, etc. Also, we prove that rings of the form (1.3) helps us to answer that Dischinger condition is not left-right symmetric i.e., one-sided Dischinger ring may not be Dischinger. In the end, we also raise several interesting questions for further investigation.

\section{ Some classes of right Dischinger rings}
In this section, besides studying some properties of right Dischinger rings, we also give some classes of rings that are right Dischinger. The first two results are very basic and follow immediately.\ We will use the following result, which is immediate from Azumaya's \cite[Lemma 3]{Az} mentioned in Section 1, frequently. 
	\begin{lemma}\label{D.Lemma1}
Let $a$ be an element in a ring $R$ such that $a=a^2x$ and $a$ is left strongly $\pi$-regular, then $a$ is strongly regular.
	\end{lemma}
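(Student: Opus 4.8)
The plan is to obtain the conclusion as an immediate consequence of Azumaya's \cite[Lemma 3]{Az} recalled in Section~1. The first step is to note that the hypothesis $a = a^2x$ already forces $a$ to be right strongly $\pi$-regular with a very explicit witness: multiplying $a = a^2x$ on the left by $a^{k-1}$ gives $a^k = a^{k+1}x$ for every $k \ge 1$, and in particular $a^1 = a^{1+1}x$. Since $a$ is assumed to be left strongly $\pi$-regular as well, say $a^m = y a^{m+1}$ for some $y \in R$ and $m \in \mathbb{N}$, the element $a$ is strongly $\pi$-regular.

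Next I would feed exactly this data into \cite[Lemma 3]{Az}, taking $n = 1$ together with the relation $a^1 = a^2x$ on the right and the relation $a^m = y a^{m+1}$ on the left. The conclusion of that lemma then gives $a^{n} = y a^{n+1}$ with $n = 1$, that is, $a = y a^2$, so $a$ is left strongly regular. Combining this with the given $a = a^2x$, the element $a$ is both left and right strongly regular, hence strongly regular, which is what we want.

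There is essentially no obstacle in this argument; the only thing worth making explicit is that the single relation $a = a^2x$ propagates unchanged to all powers in the form $a^k = a^{k+1}x$, since this is precisely what makes $a$ strongly $\pi$-regular and lets Azumaya's lemma transport the right-hand witness down to the exponent $1$. (Symmetrically one also obtains $a^m = a^{m+1}x$, but this is not needed here.)
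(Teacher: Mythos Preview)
Your proof is correct and is exactly the argument the paper has in mind: the lemma is stated as immediate from Azumaya's \cite[Lemma 3]{Az} (recalled in Section~1), and your write-up simply unpacks that citation with $n=1$. There is nothing to add or change.
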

	
	Dittmer et al.\ \cite[Lemma~3.3]{Dit} proved that if $a$ is a regular element in a Dedekind-finite ring $R$ and $aR = a^{2}R$, then $Ra = Ra^{2}$. Using Azumaya's result \cite[Lemma~3]{Az}, we can extend this result easily as follows.

	\begin{lemma}\label{D.Nasc2}
		Let $a$ be an element in a Dedekind-finite ring $R$ such that $a = a^2x$ for some $x \in R$. If $a$ is $\pi$-regular, then $Ra=Ra^2$.
	\end{lemma}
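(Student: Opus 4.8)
The plan is to deduce the statement from Lemma~\ref{D.Lemma1} by showing that $a$ is left strongly $\pi$-regular, and to obtain the latter by applying the result of Dittmer et al.\ \cite[Lemma~3.3]{Dit} not to $a$ itself --- which need not be a regular element --- but to a suitable power of $a$.

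The first step is the elementary observation that the hypothesis $a=a^{2}x$ forces the descending chain of principal right ideals $aR\supseteq a^{2}R\supseteq\cdots$ to be constant: from $a=a^{2}x\in a^{2}R$ one gets $aR\subseteq a^{2}R$, the reverse inclusion is automatic, and hence $a^{k}R=a^{k+m}R$ for all $k,m\ge 1$. Next, since $a$ is $\pi$-regular there are $n\in\mathbb{N}$ and $b\in R$ with $a^{n}=a^{n}ba^{n}$, so $a^{n}$ is a (von Neumann) regular element; moreover $a^{n}R=a^{2n}R=(a^{n})^{2}R$ by the previous remark. Thus $a^{n}$ is a regular element of the Dedekind-finite ring $R$ with $a^{n}R=(a^{n})^{2}R$, and \cite[Lemma~3.3]{Dit} applied to $a^{n}$ gives $Ra^{n}=R(a^{n})^{2}=Ra^{2n}$.

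Now, since $Ra^{2n}\subseteq Ra^{n+1}\subseteq Ra^{n}$, the equality $Ra^{n}=Ra^{2n}$ forces $Ra^{n}=Ra^{n+1}$; and because $a^{n}=(a^{n}b)a^{n}\in Ra^{n}$, we obtain $a^{n}=za^{n+1}$ for some $z\in R$, that is, $a$ is left strongly $\pi$-regular. Since $a=a^{2}x$ is also given, Lemma~\ref{D.Lemma1} now shows that $a$ is strongly regular, so in particular $a\in Ra^{2}$; hence $Ra\subseteq Ra^{2}$, and therefore $Ra=Ra^{2}$.

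The step I expect to need the most care is the descent from the \emph{eventual} equality $Ra^{n}=Ra^{2n}$ --- which is all that \cite[Lemma~3.3]{Dit} delivers, since it is $a^{n}$ rather than $a$ that is regular --- back down to the desired $Ra=Ra^{2}$ at the bottom of the chain. This descent is exactly what Azumaya's \cite[Lemma~3]{Az} provides in the guise of Lemma~\ref{D.Lemma1}, and it uses crucially that the relation $a=a^{2}x$ is available from the start and not merely for some power of $a$; the remaining bookkeeping with one-sided principal ideals is routine.
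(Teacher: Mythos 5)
Your proof is correct and follows essentially the same route as the paper's: both apply \cite[Lemma~3.3]{Dit} to the regular element $a^{n}$ (using $a^{n}R=(a^{n})^{2}R$, which follows from $a=a^{2}x$) to get $Ra^{n}=Ra^{2n}$, and then invoke Lemma~\ref{D.Lemma1} to descend to $Ra=Ra^{2}$. The only difference is that you spell out the intermediate bookkeeping ($Ra^{n}=Ra^{n+1}$, hence $a$ is left strongly $\pi$-regular) that the paper leaves implicit.
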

	\begin{proof}
		Suppose $a^n$ is regular, that is, for some $y \in R$, $a^n=a^nya^n$. As $a=a^2x$ then $a^n=a^{n+1}x=a^{2n}x^n$. By \cite[Lemma 3.3]{Dit}, $a^n$ is strongly regular, that is, $a$ is strongly $\pi$-regular. By Lemma \ref{D.Lemma1}, $a$ is strongly regular.
	\end{proof}
	The proof of \cite[Theorem 3.4]{Dit} yields the following useful result.
	\begin{lemma}\label{D.DKP}
		Let $a$ be an element in a Dedekind-finite ring $R$. If there exists an idempotent $e\in R$ such that $aR + eR = R$ and $r_R(a) + (1-e)R = R$, then $a$ is unit-regular.
	\end{lemma}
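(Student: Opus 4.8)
The plan is to construct an explicit unit and read off unit-regularity from a one-line identity. Write $f = 1-e$, so that $e,f$ are orthogonal idempotents with $e+f = 1$. The only hypothesis used in a non-obvious way is the second one: since $r_R(a) + fR = R$, I would start by writing $1 = d + w$ with $d \in r_R(a)$ (so $ad = 0$) and $w \in fR$ (so $fw = w$ and $ew = efw = 0$). Multiplying $1 = d+w$ on the left by $a$ gives $a = ad + aw = aw$, and since $w = fw$ this also yields $a = a(fw) = (af)w \in afR$, hence $afR = aR$. The candidate unit is then $u := af + e = a(1-e) + e$.

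Next I would check that $u$ is a unit. Using $f^2 = f$, $e^2 = e$ and $ef = fe = 0$, one gets $ue = afe + e = e$ and $uf = af^2 + ef = af$, so $uR \supseteq ueR + ufR = eR + afR = eR + aR = R$ by the first hypothesis $aR + eR = R$. Thus $u$ is right invertible; since $R$ is Dedekind-finite, $u$ is a unit. Finally, the same orthogonality relations give $uw = afw + ew = a(fw) + 0 = aw = a$ (using $ew = 0$ and $aw = a$ from the first paragraph), so $w = u^{-1}a$, and therefore $au^{-1}a = a(u^{-1}a) = aw = a$. Hence $a$ is unit-regular, with witnessing unit $u^{-1}$.

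I do not expect any genuine obstacle here: the argument is essentially just a handful of idempotent computations, and Dedekind-finiteness enters exactly once, to upgrade the right inverse of $u$ to a two-sided inverse. The only real content is guessing the unit $u = a(1-e) + e$ (equivalently, observing that $u$ agrees with $a$ on the summand $fR$ and with the identity on $eR$), which is what makes the two hypotheses combine; once that element is written down, both "$u$ is a unit" and "$au^{-1}a = a$" fall out immediately.
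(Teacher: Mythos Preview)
Your argument is correct. Every computation checks: from $1=d+w$ with $ad=0$ and $w\in fR$ you get $aw=a$ and $afR=aR$; then $u=af+e$ satisfies $uR\supseteq eR+afR=eR+aR=R$, so Dedekind-finiteness makes $u$ a unit; and $uw=a$ gives $au^{-1}a=aw=a$.

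The paper does not spell out a proof of this lemma at all: it simply records that the statement is extracted from the proof of \cite[Theorem~3.4]{Dit}. That argument (for Dedekind-finite exchange rings) runs exactly along the lines you wrote down, producing the same element $u=a(1-e)+e$ and verifying unit-regularity in the same way. So your approach coincides with the one implicitly cited; the only difference is that you have written out the idempotent bookkeeping in full, whereas the paper leaves it to the reader to consult \cite{Dit}.
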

	
	Recall that an element $a\in R$ is called {\em suitable} if there exists an idempotent $e \in aR$ such that $1-e \in (1-a)R$. It was proved in \cite{TAMS} that being suitable is a left-right symmetric property. An element $a\in R$ is called right exchange if for any right ideal $I$ of $R$  with $aR + I = R$, there exists an idempotent $e \in aR$ such that $1-e \in I$. Nicholson \cite[Proposition 1.6]{Nic77} proved that a regular element is suitable. Using the same idea, it is easy to prove that $\pi$-regular elements are also suitable.
	\begin{prop}\label{D.NaSc}
		Let $a$ be an element in a Dedekind-finite ring $R$ such that $a = a^2x$ for some $x \in R$. Then the following are equivalent.\\[2mm]
		\textup{(1)} $Ra=Ra^2$.\\[2mm]
		\textup{(2)} $ax$ is a suitable element. \\[2mm]
		\textup{(3)} $ax$ is an exchange element.\\[2mm]
		\textup{(4)} $ax$ is a $\pi$-regular.\\[2mm]
		\textup{(5)} $ax$ is a regular element. \\[2mm]
		\textup{(6)} $ax$ is an idempotent.\\[2mm]
		\textup{(7)} $axa = a$. 
	\end{prop}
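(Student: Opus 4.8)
The plan is to prove the main cycle $(1)\Rightarrow(7)\Rightarrow(6)\Rightarrow(2)\Rightarrow(1)$ and to observe that $(3)$, $(4)$, $(5)$ are each sandwiched between $(6)$ and $(2)$, so that all seven conditions become equivalent. The single computational fact used over and over is that the hypothesis $a=a^2x$ is equivalent to $a(1-ax)=a-a^2x=0$, i.e.\ to $a\cdot ax=a$.

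For $(1)\Rightarrow(7)$: since $a\in Ra=Ra^2$, write $a=ya^2$; left-multiplying $a=a^2x$ by $y$ gives $ya=(ya^2)x=ax$, hence $axa=(ya)a=ya^2=a$. For $(7)\Rightarrow(6)$: if $axa=a$ then $(ax)^2=(axa)x=ax$, so $ax$ is an idempotent. For $(6)\Rightarrow(2)$: if $ax$ is idempotent, then $ax$ itself is an idempotent lying in $(ax)R$ with $1-ax\in(1-ax)R$, so $ax$ is suitable. The remaining conditions are now immediate: $(6)\Rightarrow(5)$ because an idempotent is (von Neumann) regular, $(5)\Rightarrow(4)$ because a regular element is $\pi$-regular, and $(4)\Rightarrow(2)$ by the observation (recalled just before the statement) that $\pi$-regular elements are suitable; also $(6)\Rightarrow(3)$ since an idempotent, being suitable, is a right exchange element, while $(3)\Rightarrow(2)$ follows on taking $I=(1-ax)R$. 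Hence everything reduces to the single implication $(2)\Rightarrow(1)$.

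For $(2)\Rightarrow(1)$ I would argue as follows. Suppose $ax$ is suitable and fix an idempotent $g$ with $g\in(ax)R$ and $1-g\in(1-ax)R$. Writing $g=ax\,s$ shows $g=a(xs)\in aR$, and writing $1-g=(1-ax)t$ gives $a(1-g)=(a-a^2x)t=0$, so $1-g\in r_R(a)$. It follows that $aR+(1-g)R=R$ (since $g\in aR$) and that $r_R(a)+gR=R$ (since $1-g\in r_R(a)$). Applying Lemma~\ref{D.DKP} to $a$ with the idempotent $1-g$, we conclude that $a$ is unit-regular, and in particular $\pi$-regular. Since $a=a^2x$ and $R$ is Dedekind-finite, Lemma~\ref{D.Nasc2} now yields $Ra=Ra^2$, which is $(1)$.

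I expect the main obstacle to be exactly the step $(2)\Rightarrow(1)$. Suitability of $ax$ is a rather weak hypothesis, and the point of the argument is to feed the \emph{complementary} idempotent $1-g$, rather than $g$ itself, into Lemma~\ref{D.DKP}: this is what converts the hypothesis into unit-regularity of $a$, after which Lemma~\ref{D.Nasc2} finishes the job. Everything else is a one-line manipulation of the identity $a\cdot ax=a$ or a standard fact about suitable, exchange, regular, and $\pi$-regular elements.
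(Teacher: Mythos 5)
Your proposal is correct and follows essentially the same route as the paper: the only nontrivial step, $(2)\Rightarrow(1)$, is handled exactly as in the paper's proof by feeding the complementary idempotent into Lemma~\ref{D.DKP} to get unit-regularity of $a$ and then invoking Dedekind-finiteness (via Lemma~\ref{D.Nasc2}, which is the paper's packaging of \cite[Lemma~3.3]{Dit}). The remaining implications are the same elementary manipulations of $a\cdot ax=a$, merely organized as a single cycle rather than the paper's $(1)\Leftrightarrow(7)$ plus chains.
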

	\begin{proof}
		$(7) \Rightarrow (1)$ can be obtained from \cite[Lemma 3.3]{Dit}. Now, we prove $(1) \Rightarrow (7)$. Suppose $a = ya^2$. Then $axa = ya^2xa = ya^2 = a$. 
		
		The implications $(7) \Rightarrow (6) \Rightarrow (5) \Rightarrow (4)$ and $ (3) \Rightarrow (2)$ are obvious.  Also, $(4) \Rightarrow (2)$ is easy to prove. Lastly, we prove $(2) \Rightarrow (1)$. Suppose $ax$ is a suitable element. So there exists an idempotent $1-e \in axR$ such that $e \in (1-ax)R$. Now $a = a^2x$ implies that $a(1-ax) = 0$ and so $(1-ax)R \subseteq r_R(a)$. Now $1-e \in axR \subseteq aR$ and $e \in (1-ax)R \subseteq r_R(a)$ implies that $aR + eR = R$ and $r_R(a) + (1-e)R = R$. So by Lemma \ref{D.DKP}, $a$ is unit-regular and so by \cite[Lemma 3.3]{Dit}, we have $Ra = Ra^2$. 
	\end{proof}
	
	The set of all nilpotent elements in a ring $R$ is denoted by $\rm{N}$$(R)$. A ring $R$ is called {\em weakly semicommutative} if for any $a,\,b\in R$ with $ab = 0$ implies $aRb \subseteq \rm{N}$$(R)$. 
	\begin{cor}
		A weakly semicommutative ring with identity is Dischinger. 
	\end{cor}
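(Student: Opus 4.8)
The plan is to verify condition (2) of Proposition~\ref{D.Def}: I will show that every right strongly regular element of a weakly semicommutative ring $R$ with identity is strongly regular. So fix $a\in R$ with $a=a^{2}x$ for some $x\in R$. Before anything else I would record that $R$ is Dedekind-finite: if $ab=1$, then $e=1-ba$ is an idempotent and $ae=a-aba=a-a=0$, so weak semicommutativity gives $e=(ab)e=abe\in aRe\subseteq N(R)$; an idempotent lying in $N(R)$ is $0$, hence $ba=1$. Dedekind-finiteness is what allows us to feed $a$ into Proposition~\ref{D.NaSc}.

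The heart of the argument is to exhibit a nilpotent element. Set $f=1-ax$. Then $af=a-a^{2}x=0$, so by weak semicommutativity $aRf\subseteq N(R)$; in particular $axf\in N(R)$. But $axf=ax-axax=ax-(ax)^{2}$, so $ax-(ax)^{2}$ is nilpotent. Now I claim that any element $c$ of any ring with $c-c^{2}$ nilpotent is $\pi$-regular: if $(c-c^{2})^{k}=0$ then, since $c$ and $1-c$ commute, $c^{k}(1-c)^{k}=0$; writing $(1-c)^{k}=1-c\,h(c)$ for a suitable polynomial $h$, this yields $c^{k}=c^{k+1}h(c)$, and iterating, $c^{k}=c^{2k}h(c)^{k}$. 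Since all the factors here are polynomials in $c$, this reads $c^{k}=c^{k}\,h(c)^{k}\,c^{k}$, so $c^{k}$ is von Neumann regular, i.e.\ $c$ is $\pi$-regular. Applying this with $c=ax$, the element $ax$ is $\pi$-regular, so Proposition~\ref{D.NaSc} (implication $(4)\Rightarrow(1)$) gives $Ra=Ra^{2}$.

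Finally, $Ra=Ra^{2}$ means $a=ya^{2}$ for some $y\in R$, so $a$ is left strongly regular; together with $a=a^{2}x$ this makes $a$ strongly regular (alternatively, invoke Lemma~\ref{D.Lemma1}). Hence $R$ is right Dischinger. The defining condition of weak semicommutativity is left--right symmetric --- it holds in $R^{\mathrm{op}}$ iff it holds in $R$, since applying ``$xy=0\Rightarrow xRy\subseteq N(R)$'' to the pair $(y,x)$ yields exactly the opposite-handed statement --- so $R$ is also left Dischinger, and therefore Dischinger. The one step that needs care, and the one I would write out in full, is the implication ``$c-c^{2}$ nilpotent $\Rightarrow c$ is $\pi$-regular'': there one must be sure that the auxiliary factors genuinely are polynomials in $c$, so that the computation takes place inside a commutative subring. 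Everything else is bookkeeping.
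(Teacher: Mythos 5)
Your proof is correct and follows essentially the same route as the paper's: establish Dedekind-finiteness by showing the idempotent $1-ba$ lies in $\mathrm{N}(R)$, observe that $a(1-ax)=0$ forces $ax-(ax)^2\in\mathrm{N}(R)$, and then invoke Proposition~\ref{D.NaSc}. The only difference is that you spell out the standard computation showing that $c-c^2$ nilpotent implies $c$ is $\pi$-regular, a step the paper leaves implicit.
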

	\begin{proof}
		Let $R$ be a weakly semicommutative ring. It is enough to show that $R$ is right Dischinger. Suppose $a = a^2x$ for some $a,\,x \in R$. As $a(1-ax)=0$, so $ax(1-ax) \in \rm{N}$$(R)$. This implies that $ax$ is strongly $\pi$-regular. So in view of Proposition \ref{D.NaSc}, it is enough to show that $R$ is Dedekind-finite. Let $xy = 1$ for some $x,\,y \in R$ then $x(1-yx) = 0$. Since $R$ is weakly semicommutative, $xy(1-yx) = 1-yx$ is nilpotent. Also, $1-yx$ is an idempotent. This can happen only if $1-yx = 0$, that is, $yx=1$. 
	\end{proof}
	Let $R$ be a ring with identity in which $a \in \rm{N}$$(R)$ and $r \in R$ imply that $ar \in \rm{N}$$(R)$. Then $R$ is weakly semicommutative. Indeed suppose $ab = 0$ for some $a, \, b \in R$. Then $ba \in \rm{N}$$(R)$ and so $bar \in \rm{N}$$(R)$ for every $r \in R$. This implies that $arb \in \rm{N}$$(R)$ for every $r \in R$. We have repeatedly used the fact that $xy \in \rm{N}$$(R)$ iff $yx \in \rm{N}$$(R)$. 
	\begin{cor}\label{D.NI}
		Let $R$ be ring with identity in which for any elements $a \in \rm{N}$$(R)$ and $r \in R$ implies that $ar \in \rm{N}$$(R)$ (equivalently, $ra \in \rm{N}$$(R)$). Then $R$ is Dischinger. In particular, an $NI$ ring with identity is Dischinger.
	\end{cor}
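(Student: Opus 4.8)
The plan is to deduce this corollary directly from the preceding one, which states that a weakly semicommutative ring with identity is Dischinger. The first step is to verify that a ring $R$ with the stated closure property on $N(R)$ is automatically weakly semicommutative. This is exactly the computation carried out in the paragraph immediately above the statement: if $ab = 0$ then $ba \in N(R)$, hence $bar \in N(R)$ for every $r \in R$ by hypothesis, and therefore $arb \in N(R)$ for every $r$, so that $aRb \subseteq N(R)$. The only fact used beyond the hypothesis is the elementary observation that $xy \in N(R)$ if and only if $yx \in N(R)$, which follows from the identity $(xy)^{n+1} = x(yx)^n y$; I would state this as well known rather than belabor it. Once weak semicommutativity is in hand, the previous corollary applies verbatim (the Dedekind-finiteness needed there is established inside its own proof), and yields that $R$ is Dischinger.

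For the final assertion, the second step is simply to observe that an NI ring --- one in which $N(R)$ is a two-sided ideal --- trivially satisfies the hypothesis: if $a \in N(R)$ and $r \in R$ then $ar \in N(R)$ because $N(R)$ absorbs multiplication on the right, and likewise $ra \in N(R)$. Hence an NI ring with identity falls under the corollary and is Dischinger.

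I do not anticipate any genuine obstacle here: the corollary is essentially a repackaging of the two facts already available, and the only point needing a word of care --- the left/right symmetry of nilpotence that justifies the parenthetical ``equivalently'' in the hypothesis --- is standard.
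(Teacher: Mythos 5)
Your proposal is correct and follows exactly the paper's route: the paragraph preceding the corollary establishes that the hypothesis forces $R$ to be weakly semicommutative (via $ab=0 \Rightarrow ba \in \mathrm{N}(R) \Rightarrow bar \in \mathrm{N}(R) \Rightarrow arb \in \mathrm{N}(R)$, using that $xy$ is nilpotent iff $yx$ is), and the result then follows from the corollary on weakly semicommutative rings. The NI case is handled in the same trivial way, so nothing further is needed.
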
 
	
	In light of Corollary \ref{D.NI}, one may question if an NR ring is Dischinger. We will show in Section 3 (Corollary \ref{D.nr1}) that this is not true.
	
	\vspace{3mm}
	If $R$ is left duo (i.e., $Ra$ is a right ideal for any $a \in R$) and $a \in \rm{N}$$(R)$, it is easy to see that $ra \in \rm{N}$$(R)$ for any $r\in R$. Indeed if $a^n=0$, then as $aR \subseteq Ra$, so  $(ar)^n \in Ra^nR = 0$. So by Corollary \ref{D.NI}, a one-sided duo ring is Dischinger. A ring $R$ is said to be \emph{weakly left duo} if for every $a \in R$, there exists an $n \in \mathbb{N}$ such that the left ideal $Ra^n$ is a two-sided ideal. Next, we establish another sufficient condition (may not be necessary) for a Dedekind-finite ring to be right Dischinger, which will be helpful to prove that weakly left duo rings are right Dischinger.
	\begin{thm}\label{D.wld}
		Let $R$ be a Dedekind-finite ring and $a \in R$ such that $a=a^2x$ for some $x \in R$. If $Ra^n$ is an ideal for some $n\in \mathbb{N}$, then $a$ is strongly regular. 
	\end{thm}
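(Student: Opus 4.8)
The plan is to show that the power $a^n$ is von Neumann regular, and then to upgrade this to strong regularity of $a$ by the route through Dedekind-finiteness already used in Lemmas~\ref{D.Lemma1} and \ref{D.Nasc2}. The starting point is the observation that $a=a^2x$ iterates to $a=a^{k+1}x^k$ for every $k\ge 1$: indeed $a^{k+1}=a^k\cdot a^2x=a^{k+2}x$, so $a=a^{k+1}x^k=a^{k+2}x^{k+1}$. In particular $a=a^{n+1}x^n$, and multiplying this on the left by $a^{n-1}$ gives $a^n=a^{2n}x^n$, so that $a^nR=a^{2n}R=(a^n)^2R$.

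Now I would use the hypothesis that $I:=Ra^n$ is a two-sided ideal. Since $a^n\in I$ and $I$ is a right ideal, $a^nR\subseteq I=Ra^n$, so in particular $a^nx^n=wa^n$ for some $w\in R$. Substituting this into $a=a^{n+1}x^n=a\,(a^nx^n)$ gives $a=awa^n$, and multiplying on the left by $a^{n-1}$ yields $a^n=a^nwa^n$. Thus $a^n$ is regular, i.e.\ $a$ is $\pi$-regular. Since moreover $a^nR=(a^n)^2R$ in the Dedekind-finite ring $R$, \cite[Lemma~3.3]{Dit} shows that $a^n$ is strongly regular, hence $a$ is strongly $\pi$-regular; as $a=a^2x$, Lemma~\ref{D.Lemma1} then gives that $a$ is strongly regular. (Equivalently, once $a$ is known to be $\pi$-regular one may simply invoke Lemma~\ref{D.Nasc2} to get $Ra=Ra^2$ and combine it with $aR=a^2R$.)

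I expect the only nontrivial point — the main obstacle — to be the middle step: recognizing that the two-sided-ideal hypothesis on $Ra^n$ is exactly what is needed in order to rewrite the "right-hand" product $a^nx^n$ as a "left-hand" product $wa^n$, after which a single left multiplication by $a^{n-1}$ turns $a=awa^n$ into the regularity identity $a^n=a^nwa^n$. Everything else is routine bookkeeping with the identity $a=a^{k+1}x^k$ and a citation of the Dittmer--Khurana--Pace lemma.
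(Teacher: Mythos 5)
Your proof is correct and uses essentially the same idea as the paper: the two-sided-ideal hypothesis is used to rewrite a product of the form $a^n\cdot(\text{power of }x)$ as $w a^n$, a left multiplication then yields a regularity identity, and the conclusion follows from \cite[Lemma 3.3]{Dit} together with Lemma~\ref{D.Lemma1} (or Lemma~\ref{D.Nasc2}). The only cosmetic difference is that you treat all $n$ uniformly by proving $a^n=a^nwa^n$, whereas the paper observes that for $n>1$ the relation $a=a^nx^{n-1}\in Ra^n\subseteq Ra^2$ already finishes the argument and reserves the regularity computation for the case $n=1$.
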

	\begin{proof}
		As $a^n \in Ra^n$ and $Ra^n$ is an ideal, this implies $a^nx^{n-1} = a \in Ra^n$. If $n > 1$, there is nothing to prove. Suppose $n=1$, this implies $Ra$ is an ideal, so $ax=ya$ for some $y \in R$. On left multiplication this with $a$ gives $a=aya$. As $R$ is Dedekind-finite, by \cite[Lemma 3.3]{Dit}, $a$ is strongly regular. 
	\end{proof}
	\begin{cor}\label{D.duo}
		Weakly left duo rings with identity are right Dischinger.     
	\end{cor}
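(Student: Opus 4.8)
The plan is to deduce the corollary directly from Theorem \ref{D.wld}. That theorem is stated for Dedekind-finite rings, so the first (and essentially only substantive) step is to observe that a weakly left duo ring $R$ with identity is automatically Dedekind-finite; after that the argument is a one-line application together with Proposition \ref{D.Def}.

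For the Dedekind-finiteness, suppose $xy = 1$ in $R$ and set $e = 1 - yx$. A routine computation gives $e^2 = e$ and $xe = x - xyx = 0$. Since $e$ is an idempotent, $e^n = e$ for every $n$, so the weakly left duo hypothesis applied to $e$ forces $Re$ itself to be a two-sided ideal of $R$. In particular $Re$ is a right ideal containing $e$, so $ex \in Re$, say $ex = re$. Multiplying $ex = re$ on the right by $e$ gives $exe = ex$, while on the other hand $exe = e(xe) = 0$; hence $ex = 0$. Multiplying $ex = 0$ on the right by $y$ and using $xy = 1$ now yields $e = 0$, i.e.\ $yx = 1$, so $R$ is Dedekind-finite.

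With Dedekind-finiteness in hand, let $a \in R$ be any right strongly regular element, so $a = a^2x$ for some $x \in R$. Applying the weakly left duo hypothesis to $a$, there is $n \in \mathbb{N}$ with $Ra^n$ a two-sided ideal, and Theorem \ref{D.wld} then shows that $a$ is strongly regular. Thus every right strongly regular element of $R$ is strongly regular, and $R$ is right Dischinger by Proposition \ref{D.Def}. I expect the only genuine obstacle to be the Dedekind-finiteness step; once one sees that the weakly left duo condition forces the idempotent $e = 1 - yx$ to be killed, the remainder of the proof is purely formal.
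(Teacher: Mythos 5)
Your proof is correct and takes essentially the same approach as the paper: first deduce Dedekind-finiteness from the weakly left duo hypothesis applied to an idempotent arising from $xy=1$, then conclude via Theorem \ref{D.wld}. The only cosmetic difference is that you apply the hypothesis to $e=1-yx$ and show $ex=0$ forces $e=0$, whereas the paper applies it to $yx$ (so $Ryx$ is an ideal, giving $y\in Ryx$ and hence $Rx=R$); both computations are valid.
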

	\begin{proof}
		In view of the above result, i t is enough to show that $R$ is Dedekind-finite. Let $xy = 1$ for some $x,\, y \in R$. As $yx$ is an idempotent, so for every $n \in \mathbb{N}$ we have $R(yx)^n = Ryx$ is an ideal of $R$. In particular, $yxy = y \in Ryx$, implying that $R = Ry \subseteq Ryx \subseteq Rx$. Thus $Rx = R$ and so $R$ is Dedekind finite.
	\end{proof}
	Further, if $R$ is an abelian ring, then the converse of Theorem \ref{D.wld} is also true. 
	\begin{thm}
		Let $R$ be an abelian ring with identity and $a \in R$ such that $a=a^2x$ for some $x \in R$. Then $Ra^n$ is an ideal for some $n\in \mathbb{N}$ if and only if $a$ is strongly regular. In addition, $Ra^n$ is an ideal for every $n \in \mathbb{N}$.
	\end{thm}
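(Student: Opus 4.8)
The plan is to treat the two implications separately, first observing a structural fact that lets the forward direction collapse to an earlier result. I would begin by recording that an abelian ring with identity is automatically Dedekind-finite: if $xy=1$, then $e:=yx$ is an idempotent, hence central, and from $xe=x(yx)=(xy)x=x$ together with $ex=xe$ one gets $(yx)x=x$; right-multiplying by $y$ yields $yx=1$. With this in hand the forward implication is immediate from Theorem \ref{D.wld}: since $R$ is abelian (hence Dedekind-finite) and $a=a^2x$, the hypothesis that $Ra^n$ is an ideal for some $n$ already forces $a$ to be strongly regular.

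For the converse I would argue directly, aiming to prove the stronger statement that $Ra^n$ is two-sided for every $n$. Assume $a$ is strongly regular; being in particular left strongly regular it satisfies $Ra=Ra^2$, so Proposition \ref{D.NaSc} gives $axa=a$ and that $e:=ax$ is an idempotent (indeed $e^2=(axa)x=ax=e$), which is central because $R$ is abelian. Next, iterating $a=a^2x$ — noting $a^k=a^{k+1}x$ for every $k\ge 1$ — gives $a=a^{n}x^{n-1}$ for each $n$, hence $a^{n}x^{n}=(a^{n}x^{n-1})x=ax=e$. Set $f:=x^{n}a^{n}\in Ra^{n}$. From $ea^{n}=(axa)a^{n-1}=a^{n}$ one gets $f^{2}=x^{n}(a^{n}x^{n})a^{n}=x^{n}ea^{n}=x^{n}a^{n}=f$, so $f$ is a central idempotent, and similarly $a^{n}f=(a^{n}x^{n})a^{n}=ea^{n}=a^{n}$, whence $fa^{n}=a^{n}$ by centrality of $f$.

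Finally, for any $r\in R$ the computation $a^{n}r=(fa^{n})r=f(a^{n}r)=(a^{n}r)f=a^{n}rx^{n}a^{n}=(a^{n}rx^{n})a^{n}\in Ra^{n}$ (using that $f$ is central) shows $a^{n}R\subseteq Ra^{n}$; since $Ra^{n}$ is always a left ideal, it is two-sided, and this holds for every $n\ge 1$, which simultaneously gives the converse and the final assertion. The whole argument is a sequence of formal identities built around the central idempotents $e=ax=a^{n}x^{n}$ and $f=x^{n}a^{n}$; the only point needing a little care is bookkeeping the relations $ea^{n}=a^{n}$ and $a^{n}f=a^{n}$, and I do not expect any genuine obstacle beyond that.
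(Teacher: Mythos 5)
Your proof is correct and follows essentially the same route as the paper's: the forward implication is Theorem \ref{D.wld} (using that abelian rings are Dedekind-finite, which you verify explicitly), and the converse rests on producing a central idempotent that generates $Ra^n$ and whose centrality makes the left ideal two-sided. The paper's version is terser — it only writes out the converse, taking $Ra=Re$ for a central idempotent $e$ and deducing $a^nr\in Ra^n$ by iteration — whereas you exhibit the idempotents $e=ax=a^nx^n$ and $f=x^na^n$ explicitly for each $n$; the content is the same.
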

	\begin{proof}
		Since $a$ is strongly regular, we have $Ra=Re$ for some idempotent $e \in R$. As $R$ is an abelian ring, $Ra$ is an ideal. Hence, for any $r \in R$, $a^nr \in Ra^n$ for any $n \in \mathbb{N}$.
	\end{proof}
	However, it is not necessary that for a strongly regular element $a \in R$, $Ra^n$ is an ideal for some $n \in \mathbb{N}$. For example, if $R=\mathbb{M}_2(\mathbb{Q})$. Then $a=E_{11}$ is a strongly regular element in $R$, but $Ra^n=Ra$ is not an ideal for any $n\in \mathbb{N}.$

	Azumaya \cite[Theorem 4]{Az} proved that a ring with bounded index of nilpotence is Dischinger. In the following, we generalize this result.
	\begin{thm}\label{D.GenAzumaya}
		A ring $R$ with identity satisfying any of the following two conditions is right Dischinger.\\[2mm]
		\textup{(1)} There is no infinite ascending chain of principal right ideals $a_1R \subseteq a_2R \subseteq \ldots$ such that each $a_i$ is nilpotent and the index of nilpotence of each $a_i$ is less than that	of $a_{i+1}$.\\[2mm]
		\textup{(2)} There is no infinite descending chain $l_R(a_1) \supseteq l_R(a_2) \supseteq \ldots$ such that each $a_i$ is nilpotent and the index of nilpotence of each $a_i$ is less than that	of $a_{i+1}$.
	\end{thm}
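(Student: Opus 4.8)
The plan is to argue by contraposition. Suppose $R$ is \emph{not} right Dischinger; then by Proposition~\ref{D.Def} there are $a,x\in R$ with $a=a^{2}x$ while $a$ is not strongly regular, and by Proposition~\ref{D.NaSc} this last condition means exactly that $axa\neq a$. The first thing I would record is the arithmetic of the relation $a=a^{2}x$: multiplying by a power of $a$ gives $a^{m}x=a^{m-1}$ for all $m\ge 2$, and iterating this produces the identities $a^{m}x^{m-1}=a$, $a^{m}x^{m}=ax$ and $a^{m+1}x^{m}=a$ for all $m\ge 1$, together with the ``quasi-idempotence'' $(1-x^{k}a^{k})\,a\,(1-x^{k}a^{k})=a\,(1-x^{k}a^{k})$ for every $k\ge 1$ (expand the left side and use $a^{k+1}x^{k}=a$).

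For each $k\ge 1$ put $d_{k}:=a\,(1-x^{k}a^{k})=a-ax^{k}a^{k}$. The quasi-idempotence identity gives, by an immediate induction on $j$, that $d_{k}^{\,j}=a^{j}(1-x^{k}a^{k})$ for all $j\ge 1$. Hence $d_{k}^{\,k+1}=a^{k+1}-a^{k+1}x^{k}a^{k}=a^{k+1}-a^{k+1}=0$ using $a^{k+1}x^{k}=a$, whereas $d_{k}^{\,k}=a^{k}-ax\,a^{k}$ using $a^{k}x^{k}=ax$; and if $d_{k}^{\,k}$ were $0$ we would have $a^{k}=axa^{k}$, so right‑multiplying by $x^{k-1}$ and using $a^{k}x^{k-1}=a$ would force $a=axa$, against the choice of $a$. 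Thus each $d_{k}$ is nilpotent of index exactly $k+1$, so the indices of nilpotence of the $d_{k}$ strictly increase with $k$.

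It remains to assemble the $d_{k}$ into a chain. I would verify the telescoping relation $d_{k-1}=d_{k}\,(1-x^{k-1}a^{k-1})$ for $k\ge 2$: expanding $(a-ax^{k}a^{k})(1-x^{k-1}a^{k-1})$, the two inner terms collapse because $a^{k}x^{k-1}=a$, leaving precisely $a-ax^{k-1}a^{k-1}=d_{k-1}$. This relation immediately yields $d_{k-1}R\subseteq d_{k}R$ and $l_{R}(d_{k})\subseteq l_{R}(d_{k-1})$. Consequently $d_{1}R\subseteq d_{2}R\subseteq\cdots$ is an infinite ascending chain of principal right ideals, each generated by a nilpotent element, with strictly increasing indices of nilpotence — which is forbidden by (1) — and $l_{R}(d_{1})\supseteq l_{R}(d_{2})\supseteq\cdots$ is an infinite descending chain of left annihilators of nilpotent elements with strictly increasing indices — which is forbidden by (2). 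In either case the standing assumption is contradicted, so $R$ must be right Dischinger.

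The whole computation is short once the elements $d_{k}=a(1-x^{k}a^{k})$ are written down; everything reduces to bookkeeping with $a=a^{2}x$. The one place where the hypothesis ``$a$ not strongly regular'' is genuinely used is the verification that $d_{k}^{\,k}\neq 0$ — that is, that the index of $d_{k}$ does not drop below $k+1$ — since otherwise the $d_{k}$ could have bounded (even trivial) index and no forbidden chain would appear. I expect that step, together with isolating the correct normal form for $d_{k}$ and the telescoping $d_{k-1}=d_{k}(1-x^{k-1}a^{k-1})$, to be the only real content of the argument.
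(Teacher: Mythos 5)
Your construction of the elements $d_k=a(1-x^ka^k)$, the normal form $d_k^{\,j}=a^j(1-x^ka^k)$, the computation $d_k^{\,k+1}=0$, and the telescoping $d_{k-1}=d_k(1-x^{k-1}a^{k-1})$ all match the second half of the paper's proof (where these elements are called $k_n$) and are verified correctly. The gap is in your very first reduction: you invoke Proposition \ref{D.NaSc} to translate ``$a$ is not strongly regular'' into ``$axa\neq a$,'' but that proposition is stated, and is only true, for \emph{Dedekind-finite} rings, and at that point of your argument you have not established that $R$ is Dedekind-finite. The implication you need is $(7)\Rightarrow(1)$ of that proposition, which rests on \cite[Lemma 3.3]{Dit} and genuinely fails otherwise: if $bc=1$ but $cb\neq 1$, then $a:=b$ satisfies $a=a^2c$ and $aca=a$, yet $Ra\neq Ra^2$. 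In exactly this situation your key step collapses --- from $d_k^{\,k}=0$ you deduce $axa=a$ and declare a contradiction ``against the choice of $a$,'' but $axa=a$ is perfectly compatible with $a$ failing to be strongly regular in a non-Dedekind-finite ring, so no contradiction is obtained and no forbidden chain is produced in that case.

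The paper closes this hole by spending the entire first half of its proof showing that hypotheses (1) and (2) already force $R$ to be Dedekind-finite: assuming $bc=1\neq cb$, it exhibits the elements $d_n:=b-c^nb^{n+1}$, shows each is nilpotent of index exactly $n+1$, and shows $d_nR\subsetneq d_{n+1}R$ (respectively $l_R(d_n)\supsetneq l_R(d_{n+1})$), contradicting the hypothesis. Only with Dedekind-finiteness in hand do Lemma \ref{D.Nasc2} and Proposition \ref{D.NaSc} become available to finish the argument. You would need to add this step (or an equivalent one) to make your proof complete; note also that the paper treats the possible stabilization $k_nR=k_{n+1}R$ of the second chain as a genuine case (deducing that $a^{n+1}$ is regular and applying Lemma \ref{D.Nasc2}), rather than assuming the chain with increasing nilpotence indices is automatically forbidden, so if the hypothesis is read as excluding only \emph{strictly} ascending chains you would need that case analysis as well.
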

	\begin{proof}
		(1) We first show that $R$ is Dedekind-finite. Suppose, to the contrary, there exist $b,c \in R$ such that $bc = 1$ but $cb \neq 1$. Then for any $n \in \mathbb{N}$, we show that the element $d_n := b-c^nb^{n+1}$ is nilpotent of index $n+1$. Since $b^{n+1}d_n=0$ thus $d_n^2=bd_n$. So we can write $$(d_n)^k = b^{k-1}d_n  \quad \text{for all} \quad k \in \mathbb{N}.$$ Therefore, $$(d_{n})^n = b^{n-1}d_n = b^{n-1}(b-c^nb^{n+1}) = b^n - cb^{n+1} \neq 0,$$ because $b^n - cb^{n+1} = 0$ implies $(b^n - cb^{n+1})c^n = 0$ and so $cb = 1$, a contradiction. Also $(d_{n})^{n+1} = b^nd_n = 0$. Now we show that $d_nR \subseteq d_{n+1}R$. As $b^n(1-c^nb^n) = 0$, so we have  $$d_{n+1}(1-c^{n+1}b^{n+1}) = (b-c^{n+1}b^{n+2})(1-c^{n+1}b^{n+1}) = b(1-c^{n+1}b^{n+1}) = b - c^{n}b^{n+1} = d_n.$$ So $d_nR \subseteq d_{n+1}R$. If $d_nR = d_{n+1}R$, then as $b^nd_n = 0$, so $b^nd_{n+1} = 0$. But this implies $0 = b^n(b-c^{n+1}b^{n+2}) = b^{n+1} - cb^{n+2}$ and on right multiplication by $c^{n+1}$ , we get $1 = cb$, a contradiction. This implies that  $d_nR \subset d_{n+1}R$, which is a contradiction to the assumption. Thus $R$ is Dedekind-finite. 
		
		Now, suppose that $a = a^2x$ for some $a,\,x\in R$. Then for $k_n := a - ax^na^n$, where $n \in \mathbb{N}$, we have $k_{n}^{n+1} =0$. Note that for any $1 \leq m \leq n$, we have  $$k_n^m = a^{m-1}k_n= a^m-ax^{n-m+1}a^n.$$ If for some $m < n$, $k_{n}^{m} = 0$, we have $a^m \in Ra^n$, i.e. $a$ is strongly $\pi$-regular so by Lemma \ref{D.Lemma1}, $a$ is strongly regular and we are through. Further, if $k_n^n=0$, then $a^n$ is regular. By Lemma \ref{D.Nasc2}, $a$ is strongly regular. Now, assume that each $k_n$ is nilpotent of index $n+1$. Note that $$k_n = k_{n+1}(1-x^na^n).$$ Hence, the chain of right ideals $k_1R \subseteq k_2R \subseteq \ldots$ must terminates by assumption. Hence, $k_nR = k_{n+1}R$. This further implies, $$0= a^nk_n=a^nk_{n+1}=a^n(a - ax^{n+1}a^{n+1}) = a^{n+1} - a^{n+1}x^{n+1}a^{n+1}.$$  So $a^{n+1}$ is regular and thus by Lemma \ref{D.Nasc2}, we have $Ra = Ra^2$. \\[3mm]
		(2) This proof runs on similar lines to that of part (1) above. So we only give a sketch of the proof. First, we show that $R$ is Dedekind-finite. If $bc = 1$ but $cb \neq 1$ for some $b,\,c \in R$, then $l_R(d_1) \supseteq l_r(d_2) \supseteq \ldots$, where $d_n:= b-c^nb^{n+1}$ is as in (1) above, is a proper descending chain as $b^n \in l_R(d_n) \verb=\= l_R(d_{n+1})$. As each $d_n$ is nilpotent of index $n+1$, this is a contradiction. So $R$ is Dedekind-finite. 
		
		Now suppose $a = a^2x$ for some $a,\,x\in R$.  Then $l_R(k_1) \supseteq l_R(k_2) \supseteq \ldots$, where $k_n := a - ax^na^n$ is as in part (1). If for some $m \leq n$, $k_{n}^{m} = 0$, then $a$ is strongly regular as in (1). If for some $n \in \mathbb{N}$, we have $a^n \in l_R(k_{n+1})$, then $a$ is $\pi$-regular and thus by Lemma \ref{D.Nasc2}, $Ra = Ra^2$. Otherwise $l_R(k_1) \supseteq l_R(k_2) \supseteq \ldots$ is a proper chain because $a^n \in l_R(k_n) \verb=\= l_R(k_{n+1})$, which is not possible.
	\end{proof}
	
	The following result can be easily obtained from Azumaya \cite[Theorem 2]{Az}.
	\begin{thm}\label{D.leftacc}
		Any ring with identity satisfying the ascending chain condition on left annihilators of subsets of $R$ is left Dischinger. In particular, left Noetherian rings are left Dischinger.
	\end{thm}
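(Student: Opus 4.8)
The plan is to transplant Azumaya's proof of \cite[Theorem 2]{Az} to the left side — equivalently, to apply it in the opposite ring $R^{\mathrm{op}}$, since that proof uses only an ascending chain condition on one-sided annihilators. Two reductions come first. By the left analogue of Proposition \ref{D.Def} (valid because Azumaya's Lemma~3 is side-symmetric), a ring is left Dischinger if and only if every left strongly regular element is strongly regular, so it suffices to show that $a=xa^{2}$ forces $a$ strongly regular. And for the ``in particular'' clause it is enough to note that every left annihilator $l_{R}(S)=\{r\in R:rS=0\}$ is a left ideal, so a left Noetherian ring automatically has ACC on left annihilators.

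So assume $R$ has ACC on left annihilators and fix $a$ with $a=xa^{2}$. First I would record that $R$ is Dedekind-finite: as soon as $bc=1\neq cb$ one has the usual infinite family of matrix units, whose diagonal idempotents $f_{1},f_{2},\dots$ give a strictly ascending chain $Rf_{1}\subsetneq R(f_{1}+f_{2})\subsetneq\cdots$ of left annihilators (recall $Re=l_{R}(1-e)$ for $e$ idempotent), contradicting ACC. Next, iterating $a=xa^{2}$ yields $a^{k}=xa^{k+1}$, hence $a^{k}=x^{m}a^{k+m}$ for all $k\geq 1$ and $m\geq 0$; in particular $a^{N}=x^{N}a^{2N}$ for every $N$, and $Ra=Ra^{2}=\cdots$.

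The crux is to make the chain condition count. Applying ACC to $l_{R}(a)\subseteq l_{R}(a^{2})\subseteq\cdots$ produces $N$ with $l_{R}(a^{N})=l_{R}(a^{m})$ for all $m\geq N$; the only genuinely new information here is that right multiplication by $a^{N}$ is injective on $Ra^{N}$, since $ra^{2N}=0$ gives $r\in l_{R}(a^{2N})=l_{R}(a^{N})$ and so $ra^{N}=0$. Now put $t:=x^{N}a^{N}\in Ra^{N}$. From $a^{N}=x^{N}a^{2N}$ we get $ta^{N}=a^{N}$, hence $(t^{2}-t)a^{N}=0$ and $(a^{N}t-a^{N})a^{N}=a^{N}ta^{N}-a^{2N}=a^{N}a^{N}-a^{2N}=0$; as $t^{2}-t$ and $a^{N}t-a^{N}$ lie in $Ra^{N}$, injectivity forces $t^{2}=t$ and, decisively, $a^{N}t=a^{N}$, i.e.\ $a^{N}x^{N}a^{N}=a^{N}$. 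Thus $a^{N}$ is von Neumann regular, so $a$ is $\pi$-regular; the left--right dual of Lemma \ref{D.Nasc2} then gives $aR=a^{2}R$, and together with $a=xa^{2}$ this makes $a$ strongly regular, completing the proof.

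I expect the one delicate point to be exactly this matter of making the chain condition do real work. The identities $a^{N}=x^{N}a^{2N}$ and $Ra^{N}=Ra^{N+1}$ hold in any ring with identity, so stabilisation of the left-ideal chain is by itself vacuous; one has to extract from $l_{R}(a^{N})=l_{R}(a^{2N})$ the injectivity of right multiplication by $a^{N}$ on $Ra^{N}$, and it is precisely that injectivity which upgrades the one-sided identity $a^{N}=x^{N}a^{2N}$ to the symmetric relation $a^{N}=a^{N}x^{N}a^{N}$. Everything else is routine bookkeeping: keeping the two sides apart and calling on the opposite-ring forms of Proposition \ref{D.Def}, Lemma \ref{D.Nasc2}, and the underlying Azumaya Lemma~3.
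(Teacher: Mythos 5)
Your proof is correct. Note, though, that the paper does not actually write out an argument here: it obtains Theorem \ref{D.leftacc} by simply citing Azumaya \cite[Theorem 2]{Az}, which already asserts that a ring with no infinite ascending chain $l_S(s)\subseteq l_S(s^2)\subseteq\ldots$ is left Dischinger. What you have done is reconstruct, self-containedly, essentially the content of that theorem: the heart of your argument (stabilisation of $l_R(a^n)$ giving injectivity of right multiplication by $a^N$ on $Ra^N$, whence $a^N=a^Nx^Na^N$) is exactly the Azumaya/Fitting-style computation, and it is carried out correctly; the reduction via the left analogue of Proposition \ref{D.Def} and the observation that left annihilators are left ideals (for the Noetherian case) are also fine. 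Where your route genuinely differs is the endgame: instead of Azumaya's purely element-wise conclusion, you first extract Dedekind-finiteness from the chain condition (via the orthogonal idempotents $E_{ii}$ and $Re=l_R(1-e)$ — you might add the one-line check that the chain $R(f_1+\cdots+f_n)$ is strictly increasing) and then invoke the left--right dual of Lemma \ref{D.Nasc2}, which ultimately rests on \cite[Lemma 3.3]{Dit}. This makes the proof slightly heavier than the cited source but has the advantage of making the paper self-contained and of reusing its own lemmas; a marginally leaner finish would note that $a^N$ regular together with $Ra^N=Ra^{2N}$ and Dedekind-finiteness gives $a^NR=a^{2N}R$ directly, after which Lemma \ref{D.Lemma1} (dualised) yields strong regularity — but that is the same mechanism you are using through Lemma \ref{D.Nasc2}.
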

	It is natural to ask whether a ring satisfying the ascending chain condition on left annihilators of subsets is right Dischinger. In Section 3, we will show that this may not be. Moreover, we prove that with some additional condition, a ring satisfying the condition in Theorem \ref{D.leftacc} becomes right Dischinger (see Corollary \ref{D.goldie}). For this, we need the following lemma. We will tacitly use the fact that if $ab$ is nilpotent, then so is $ba$ for any $a,\,b \in R$.
	\begin{lemma}\label{D.Nra}
		Let $a$ be an element in a ring $R$ and $\rm{N}$$^r_R$$(a)$ be the subset of nilpotent elements such that for every $r \in \rm{N}$$^r_R$$(a)$ there exist elements $r_i \in R$, $i \in \mathbb{N}$ such that $r=r_1a$, $ar_i =  r_{i+1}a$ and $ar_k = 0$ for some $k \in \mathbb{N}$. Then $\rm{N}$$^r_R$$(a)$ is a nil subring of $R$.
	\end{lemma}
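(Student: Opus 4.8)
The plan is to verify directly that $\rm{N}^r_R(a)$ is closed under subtraction and under multiplication; since by hypothesis every element of it is nilpotent, this exhibits $\rm{N}^r_R(a)$ as a nil subring. Two preliminary remarks carry the whole argument. The first is a \emph{truncation} normalization: if $r\in\rm{N}^r_R(a)$ comes with a sequence $r=r_1a$, $ar_i=r_{i+1}a$ $(i\ge 1)$, $ar_k=0$, then redefining $r_i:=0$ for all $i>k$ preserves every required relation — the only new equalities to check are $ar_k=r_{k+1}a$ and $ar_i=r_{i+1}a$ for $i>k$, and in each case both sides are $0$. So I may assume that each element of $\rm{N}^r_R(a)$ carries a sequence that is eventually $0$. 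The second is the identity $r^n=r_1r_2\cdots r_na^n$ (for all $n\ge 1$), proved by a one-line induction once one notes $a^nr_1=r_{n+1}a^n$ (iterate $ar_i=r_{i+1}a$). Applied to a truncated sequence this gives $r^{k+1}=0$ at once, so the existence of an admissible sequence already forces nilpotence — convenient, since membership in $\rm{N}^r_R(a)$ demands nilpotence and not merely the sequence condition.

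For subtraction, take $r,s\in\rm{N}^r_R(a)$ with truncated sequences $(r_i)$, $(s_i)$, and put $t_i:=r_i-s_i$. Then $t_1a=r-s$, $at_i=ar_i-as_i=r_{i+1}a-s_{i+1}a=t_{i+1}a$, and $t_i=0$ for all large $i$, so $at_i=0$ for large $i$. Hence $r-s$ has an admissible sequence, is nilpotent by the identity above, and therefore lies in $\rm{N}^r_R(a)$. Since also $0=0\cdot a\in\rm{N}^r_R(a)$, the set is an additive subgroup.

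For multiplication, the relation $as_1=s_2a$ gives $rs=(r_1a)(s_1a)=r_1s_2a^2=(r_1s_2a)\,a$, which suggests the sequence $u_i:=r_is_{i+1}a$. A short computation using $ar_i=r_{i+1}a$ and $as_{i+1}=s_{i+2}a$ yields $u_1a=rs$ and $au_i=u_{i+1}a$; and since $r_i=0$ for large $i$, also $u_i=0$ for large $i$. Thus $rs$ has an admissible sequence, is nilpotent, and lies in $\rm{N}^r_R(a)$.

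I do not anticipate a genuine obstacle; the proof is essentially a matter of producing the right auxiliary sequences $t_i$ and $u_i$ and checking the three defining properties. The step that needs a little thought is the product case, where one must first push an $a$ through via $as_1=s_2a$ to absorb the ``extra'' copy of $a$ before the pattern $u_i=r_is_{i+1}a$ becomes visible. The truncation normalization of the first paragraph is what makes the third defining property — the existence of an index $k$ with $a(\cdot)_k=0$ — automatic for $r-s$ and $rs$, and the identity $r^n=r_1\cdots r_na^n$ is what certifies, as required, that these new elements are actually nilpotent.
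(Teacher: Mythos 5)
Your proof is correct and follows essentially the same route as the paper's: your product sequence $u_i=r_is_{i+1}a$ is literally the paper's witness $r_ias_i$ rewritten via $as_i=s_{i+1}a$, and the additive part is the same routine check. The only difference is that you additionally verify nilpotence explicitly through the identity $r^n=r_1\cdots r_na^n$ and the truncation normalization, details the paper leaves implicit.
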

	\begin{proof}
		It is easy to see that $\rm{N}$$^r_R$$(a)$ is closed under addition. Let $r,\, s \in \rm{N}$$^r_R$$(a)$.  Then there exist $r_i,\,s_i \in R$ for $i \in \mathbb{N}$ such that $r=r_1a$, $ ar_i =  r_{i+1}a$, $s=s_1a$, and  $as_i =  s_{i+1}a$ for all $i \in \mathbb N$. Then $ar_ias_i = r_{i+1}as_{i+1}a$ for all $i$. Let $k$ be the least positive integer such that one of the $ar_k$ or $as_k$ is zero. Then $ar_kas_k = 0$ implying that $r_1as_1a \in \rm{N}$$^r_R$$(a)$.
	\end{proof}
	Analogously, we can define the left version of $\rm{N}$$^r_R$$(a)$for any element $a \in R$ and denote it by $\rm{N}$$^{\ell}_R$$(a)$
	\begin{thm}\label{D.Nra2}
		A ring $R$ in which $\rm{N}$$^r_R$$(a)$ is a nilpotent subring for every right strongly regular element $a$, then $R$ is right Dischinger.
	\end{thm}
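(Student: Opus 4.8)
The plan is to verify condition~(2) of Proposition~\ref{D.Def} directly, i.e.\ to show that every right strongly regular element of $R$ is strongly regular. Fix $a\in R$ with $a=a^2x$; since such an $a$ is itself right strongly regular, the hypothesis tells us that $\mathrm{N}^r_R(a)$ is a nilpotent ring.

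The main idea is to feed into $\mathrm{N}^r_R(a)$ the same family of elements that appears in the proof of Theorem~\ref{D.GenAzumaya}. For $n\in\mathbb{N}$ set $k_n:=a-ax^na^n$; from that proof we have $k_n^{\,n+1}=0$ and, for $1\le m\le n$, $k_n^m=a^{m-1}k_n=a^m-ax^{\,n-m+1}a^n$. The step I expect to be the crux is checking that every $k_n$ lies in $\mathrm{N}^r_R(a)$. For this I would take $r_i:=1-ax^{\,n-i+1}a^{\,n-i}$ for $1\le i\le n$ and $r_i:=0$ for $i>n$, and verify — using $a=a^2x$, equivalently $a^2x^{\,j}=ax^{\,j-1}$ for $j\ge 1$ — that $r_1a=k_n$, that $ar_i=r_{i+1}a$ for every $i$, and that $ar_n=a(1-ax)=0$; together with the nilpotence of $k_n$ this is precisely the membership condition of Lemma~\ref{D.Nra}. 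Guessing the sequence $(r_i)$ and telescoping these identities is really the only non-routine part of the argument; everything else is bookkeeping with the relation $a=a^2x$.

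Granting that, the conclusion is quick. Because $\mathrm{N}^r_R(a)$ is nilpotent, $\bigl(\mathrm{N}^r_R(a)\bigr)^N=0$ for some $N\in\mathbb{N}$, so in particular $k_{N+1}^{\,N}=0$. Feeding $m=N$, $n=N+1$ into the power formula above, this becomes $a^N-ax^2a^{N+1}=0$, that is, $a^N=(ax^2)\,a^{N+1}\in Ra^{N+1}$; hence $a$ is left strongly $\pi$-regular. Since also $a=a^2x$, Lemma~\ref{D.Lemma1} then gives that $a$ is strongly regular, and therefore $R$ is right Dischinger. I would also point out that, in contrast with several earlier results of this section, no Dedekind-finiteness assumption has to be added or separately established here, because Lemma~\ref{D.Lemma1} already converts a left strongly $\pi$-regular presentation of $a$ into a left strongly regular one.
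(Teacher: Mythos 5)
Your argument is essentially the paper's: both proofs manufacture, for each $n$, a nilpotent element of $\mathrm{N}^r_R(a)$ whose $m$-th power reduces to an expression of the form $a^m$ minus an element of $Ra^{\,n+1}R$, invoke nilpotence of the subring to conclude $a^N\in Ra^{N+1}$, and finish with Lemma~\ref{D.Lemma1}; your computations ($r_1a=k_n$, $ar_i=r_{i+1}a$, $ar_n=0$, $k_n^{\,n+1}=0$, and the evaluation of $k_{N+1}^{\,N}$) all check out, and you are right that no separate Dedekind-finiteness argument is needed. The one substantive divergence is your choice of witnesses $r_i=1-ax^{\,n-i+1}a^{\,n-i}$: these presuppose $1\in R$, whereas the paper explicitly allows rings without identity and the theorem is stated without that hypothesis. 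The paper instead feeds in the element $a^2-x^na^{n+2}=(a-x^na^{n+1})a$, whose witness chain $a-x^na^{n+1},\ a-ax^na^n,\ a-ax^{n-1}a^{n-1},\ldots$ lies entirely in the subring generated by $a$ and $x$ (your $k_n$ is precisely the second term of that chain), so no identity is needed. As written, your proof establishes the theorem for rings with identity; to get the general statement you would need to exhibit $k_n$ as $r_1a$ with $r_1\in R$, which can fail when $1\notin R$ since $a$ itself need not lie in $Ra$ --- so the clean fix is simply to switch to the paper's element.
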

	\begin{proof}
		It is enough to prove that $R$ is right Dischinger. Suppose $a = a^2x$ for some $a,\,x \in R$. Since $\rm{N}$$^r_R$$(a)$ is nilpotent subring so $(\rm{N}$$^r_R$$(a))^k = 0$, where $k \in \mathbb{N}$.  As $a^2~-~a^{n+1}x^na = 0$, so by Lemma \ref{D.Nra}, $a^2-x^na^{n+2} \in\rm{N}$$^r_R$$(a)$ for any $n \in \mathbb{N}$. Therefore, $(a^2-x^na^{n+2})^k= 0$. As $a^{n+2}~(a^2~-~x^na^{n+2}) = 0$, this implies $ (a^2-x^na^{n+2})^2=a^2(a^2-x^na^{n+2})$. Therefore, we have $$0=(a^2-x^na^{n+2})^k=(a^2)^{k-1}(a^2-x^na^{n+2})=a^{2k}-a^{2k-2}x^na^{n+2}.$$ For $n=2k-1$, the above equation gives
		
		$$a^{2k}=a^{2k-2}x^{2k-1}a^{2k+1}$$ implying that $a$ is right strongly $\pi$-regular. So $a$ is strongly regular by Lemma \ref{D.Lemma1}.	
	\end{proof}
	
	A ring $R$ with identity is called {\em left Goldie} if it satisfies the ascending chain condition on left annihilators and $_RR$ has finite uniform dimension. In \cite{Lanski}, Lanski proved that every nil subring of a left Goldie ring with identity is nilpotent. In view of this and Theorem \ref{D.Nra2}, we have the following result. 
	\begin{cor}\label{D.goldie}
		A ring $R$ in which every nil subring is nilpotent is Dischinger. In particular, a left Goldie ring with identity is a Dischinger.
	\end{cor}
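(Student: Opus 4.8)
The plan is to combine Theorem~\ref{D.Nra2} with Lemma~\ref{D.Nra}, and then to invoke Lanski's theorem for the special case. First, suppose $R$ is a ring in which every nil subring is nilpotent, and let $a$ be any right strongly regular element of $R$. By Lemma~\ref{D.Nra}, the set $\mathrm{N}^r_R(a)$ is a nil subring of $R$, so by hypothesis it is nilpotent. Since this holds for every right strongly regular element $a$, Theorem~\ref{D.Nra2} applies and shows that $R$ is right Dischinger.

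Next, I would run the left-handed mirror of this argument. The condition ``every nil subring is nilpotent'' is plainly left-right symmetric, since it refers to subrings rather than one-sided ideals; hence it equally forces $\mathrm{N}^{\ell}_R(a)$ to be nilpotent for every left strongly regular element $a$. The left analog of Theorem~\ref{D.Nra2}, obtained by transcribing its proof with $\mathrm{N}^r_R(a)$ replaced by $\mathrm{N}^{\ell}_R(a)$ and the multiplications performed on the other side, then yields that $R$ is left Dischinger. Combining the two sides, $R$ is Dischinger.

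For the final assertion, I would simply quote Lanski~\cite{Lanski}: every nil subring of a left Goldie ring is nilpotent. Thus a left Goldie ring satisfies the hypothesis of the first part, and is therefore Dischinger.

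I do not expect any genuine obstacle here; the corollary is essentially a bookkeeping consequence of the preceding results. The only point that deserves a sentence of care is the precise formulation of the left analog of Theorem~\ref{D.Nra2}, but this is a routine mirror of the argument already given (and one could instead remark that, since the hypothesis is two-sided, it suffices to establish one side and appeal to symmetry).
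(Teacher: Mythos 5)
Your proposal is correct and matches the paper's intended argument exactly: the paper derives this corollary by combining Lemma~\ref{D.Nra}, Theorem~\ref{D.Nra2}, the left--right symmetry of the hypothesis, and Lanski's theorem, precisely as you do. No issues.
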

	As a left Noetherian ring is left Goldie, so one-sided Noetherian rings with identity are Dischinger.

	\section{Some classes of rings that are not right Dischinger and left-right asymmetry}
	
	In this section,  we prove that a left Dischinger ring may not be right Dischinger. We will also find several classes of Dedekind-finite rings which are not right Dischinger. For instance, we will show that an NR ring or a ring satisfying the ascending chain condition on left annihilators or a UU ring may not be right Dischinger. This will be done by an in-depth analysis of the properties of the ring constructed in \cite{Dit} given in $(1.3)$ above for $F = \mathbb{F}_2$. Most of the properties of 
	\begin{equation} R = \mathbb{F}_2\langle a,x :  a= a^2x\rangle, \tag{3.1} \end{equation} 
	proved in this section holds even when $\mathbb{F}_2$ is replaced with a commutative domain $F$ with identity, yet we study the properties of the ring in (3.1) for notational convenience and also as it serves our purpose of proving that the classes of rings listed above are not right Dischinger. From now on, $R$ will be as in $(3.1)$ above.
	
	\vspace{3mm}
	As noted in \cite{Dit}, monomials in $R$ can be simplified using the relation $a=a^2x$ and the set of all reduced monomials forms an $\mathbb{F}_2$-module basis for $R$ (denoted by $\mathcal{B}$). Also, any monomial in the reduced form looks like $$ m= x^{i_{n}}a\cdots x^{i_{2}}ax^{i_1}a^{i_0}$$
	for some unique integers $n \geq1$, $i_1, i_2,..., i_{n-1} \geq 1$ and $i_0, i_n \geq 0$. Here, $n$ is the depth of $m$, denoted by depth($m$). Recall that for any element $r \in R$, a monomial $m$ appears in the support of $r$, denoted as $m \in \operatorname{supp}(r)$ if the coefficient of $m$ in the reduced form of $r$ is nonzero. We define the length of a monomial $m$ to be the total number of letters appearing in it, counted with multiplicity and denote it by $l(m)$. For example, $l(x^3ax^2) = 6$ and $l(xa^3) = 4$. Let $x^{i_{n}}a\cdots x^{i_{2}}ax^{i_1}a^{i_0}$ and $x^{j_{n}}a\cdots x^{j_{2}}ax^{j_1}a^{j_0}$ be two monomials in reduced form. As in \cite{Dit}, we say $$x^{i_{n}}a\cdots x^{i_{2}}ax^{i_1}a^{i_0} \prec_n  x^{j_{n}}a\cdots x^{j_{2}}ax^{j_1}a^{j_0}$$ if
	either $i_0 > j_0$ or if there exists an index $n_0$ with $1 \leq n_0 \leq n$ such that $i_k=j_k$ for all $k < n_0$, but $i_{n_0} < j_{n_0}$. As for any reduced monomial $m = x^{i_{n}}a\cdots x^{i_{2}}ax^{i_1}a^{i_0}$, we put $m(l) =x^{i_{l}}a\cdots x^{i_{2}}ax^{i_1}a^{i_0}$ for any $l \leq n$.  Also, if $m_1, m_2 \in \mathcal{B}$ have different depths $d_1$ and $d_2$ respectively and $d=min \{d_1,d_2\}$, then $m_1 \prec m_2$ if either $m_1(d) \prec_d m_2(d)$, or $m_1(d)=m_2(d)$ and $d=d_1 < d_2$. For example, $a^2 \prec a$, $x \prec x^2$, $a^2 \prec xa^2$, $a \prec x$, $xa^3 \prec x^2a^3$ etc. It was proved in \cite[Lemma 2.1]{Dit} that  `$\prec$' is a total order relation on the set of all reduced monomials. In a finite subset of monomials in the set $\mathcal{B}$, there always exists a unique maximal monomial with respect to the order $\prec$, whereas there can be more than one monomial of largest length in that set. For example, let $\mathcal{S}=\{axa^2, x^2a^2,ax\}$. Then $ax$ is the unique maximal monomial in the set $\mathcal{S}$ with respect to $\prec$, while both $axa^2$ and $x^2a^2$ have the largest length, equal to $4$.
	
	\vspace{3mm} We will repeatedly use the following result that was proved in \cite[Lemma 2.2]{Dit}.
	\begin{lemma}\label{D.mainlemma}
		If $m_1$ and $m_2$ are monomials in $\mathcal{B}$, then either $m_1m_2$ is already reduced or $m_1m_2 \prec m_2$ after reduction. 
	\end{lemma}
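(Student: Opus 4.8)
The plan is to argue directly with the rewriting rule $a^{2}x\mapsto a$: recall that a monomial lies in $\mathcal{B}$ exactly when it contains no subword $a^{2}x$, and reduction is the repeated application of this rule. Since $m_1$ and $m_2$ are both reduced, any subword $a^{2}x$ of $m_1m_2$ must straddle the junction, and each application of the rule creates at most one new $a^{2}x$, one $x$-block further into $m_2$. Write $m_1=m_1'a^{s}$ with $m_1'$ empty or ending in the letter $x$. I would first note that $m_1'$ is never touched by a reduction, so it suffices to reduce $a^{s}m_2$ and then re-attach $m_1'$ (harmlessly, since for $s\ge1$ the reduced form of $a^{s}m_2$ begins with $a$ while $m_1'$ ends in $x$). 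If $s=0$ then $m_1m_2$ is already reduced, the two $x$-blocks at the junction merging; and if $m_2$ is a power of $a$ then $a^{s}m_2$, hence $m_1m_2$, is reduced. So I would reduce to the case $s\ge1$ and $m_2=x^{j_n}a\,x^{j_{n-1}}a\cdots x^{j_1}a^{j_0}$ with $n\ge1$ and some positive $x$-exponent.

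Next I would trace the reduction of $a^{s}m_2$ as a ``carry'' moving leftward through the $x$-blocks of $m_2$. At each stage the configuration is a block $a^{c}$ immediately left of some $x$-block $x^{t}$ of $m_2$, everything farther left already reduced; initially $c=s$ facing $x^{j_n}$, or $c=s+1$ facing $x^{j_{n-1}}$ if $j_n=0$ (the leading $a$ of $m_2$ having merged into $a^{s}$). If $c\le1$ there is no $a^{2}x$ here, and since $m_2$ is reduced none to the right either, so $m_1m_2$ is already reduced. If $c\ge2$, then $a^{c}x^{t}$ reduces to $a^{c-t}$ when $c>t$ and to $a\,x^{\,t-c+1}$ when $c\le t$. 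If $c\le t$ the resulting word has only single interior $a$-blocks and reduction halts. If $c>t$ the single $a$ of $m_2$ just past $x^{t}$ merges into the remnant to give $a^{\,c-t+1}$ facing the next $x$-block, with $c-t+1\ge2$, so the carry advances. Thus the carry marches through $x^{j_n},\dots,x^{j_1}$ until it either halts at some $x^{j_p}$ (where necessarily $2\le c\le j_p$), or passes the last block $x^{j_1}$ (with $c>j_1$ there), after which its remnant $a^{\,c-j_1}$ merges with the trailing $a^{j_0}$. As $m_2$ has finitely many $x$-blocks this terminates, in a reduced monomial $m$.

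It then remains to compare $m$ with $m_2$ under $\prec$. If the carry never started, $m_1m_2$ is already reduced and there is nothing to show. Otherwise there are two cases. If the carry halted at $x^{j_p}$, then the exponents of $m$ in positions $0,\dots,p-1$ coincide with those of $m_2$, while in position $p$ the exponent of $m$ is $j_p-c+1<j_p$ (as $c\ge2$); so $m\prec m_2$ by the clause of $\prec_d$ at index $p$. If instead the carry ran past all $x$-blocks and merged into $a^{j_0}$, the trailing $a$-exponent of $m$ is $c-j_1+j_0>j_0$, so $m\prec m_2$ by the leading clause of $\prec_d$. In both cases one also checks that the comparison survives the forced truncation $m\mapsto m(d)$, $d=\min\{\operatorname{depth}(m),\operatorname{depth}(m_2)\}$, since $d$ is at least the differing index; and re-attaching $m_1'$ only changes the exponents of $m$ in positions \emph{above} that index, hence does not affect the comparison. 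Therefore $m_1m_2\prec m_2$ after reduction.

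The hard part will be the bookkeeping in this last step: verifying that ``agreeing on all lower-index exponents and being strictly smaller at the first differing index'' is preserved both under the truncation $m\mapsto m(d)$ (unavoidable, since $m$ and $m_2$ usually have different depths) and under re-attaching the inert prefix $m_1'$, and disposing of the boundary cases ($m_2$ a power of $a$, $m_2$ ending in $x$ so $j_0=0$, depth-one monomials) by direct inspection. With the carry set up precisely, each of these is a short computation.
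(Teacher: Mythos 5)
The paper itself gives no proof of this lemma---it is imported verbatim from \cite[Lemma 2.2]{Dit}---but your argument is correct and is essentially the expected reduction-tracking proof: writing $m_1=m_1'a^s$ with $m_1'$ inert, propagating the trailing $a$-block as a carry through the $x$-blocks of $m_2$, and observing that the two possible terminal configurations (halting inside some $x^{j_p}$ with new exponent $j_p-c+1<j_p$ and all lower-index exponents unchanged, or running past $x^{j_1}$ and enlarging the trailing $a$-exponent) give precisely the two clauses of $\prec$, with the truncation to depth $d$ and the re-attachment of $m_1'$ not disturbing the differing index. The remaining bookkeeping you flag does go through as you describe, so no gap.
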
 
	The following result will be crucial for everything that follows. The last part of this result was first proved by Nielsen and was communicated to the second author in 2012.
	\begin{thm}\label{D.mainthm}
		If $f \in R$ has a monomial in its support that is in $Rx$, then $r_R(f) = 0$. In particular,  $r_R(g) = 0$ for any $g \in R$ that has $1$ in its support.
	\end{thm}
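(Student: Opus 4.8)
My plan is to argue inside the monomial basis $\mathcal{B}$, using one structural observation: nothing adjoined to the \emph{right} of an $x$ can create the forbidden factor $a^2x$. Concretely, if $p\in\mathcal{B}$ has last letter $x$, then $pm'$ is already reduced for every $m'\in\mathcal{B}$; hence $\operatorname{supp}(pr)=\{pm':m'\in\operatorname{supp}(r)\}$ for every $r$, left multiplication by $p$ is injective, $m'\prec pm'$ for every $m'$ (immediate from the description of $\prec$ recalled above), and, more generally, if $q=pu$ is reduced with $p$ ending in $x$ then $\overline{qm'}=p\,\overline{um'}$ for every $m'$, so every monomial occurring in $\overline{qm'}$ still has $p$ as a prefix. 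I also note that the ``in particular'' clause follows at once from the first assertion: if $1\in\operatorname{supp}(g)$ then $x=x\cdot1\in\operatorname{supp}(xg)$ (left multiplication by $x$ is reduction-free and injective on monomials), $x$ ends in $x$, and $r_R(g)\subseteq r_R(xg)$.

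Assume now $fr=0$ with $r\neq0$ and fix $p\in\operatorname{supp}(f)$ ending in $x$; put $M=\max_{\prec}\operatorname{supp}(r)$. The core step is a ``leading monomial'' lemma: there exists $q\in\operatorname{supp}(f)$, $q\neq p$, having $p$ as a proper prefix. Expand $fr=\sum_{q\in\operatorname{supp}(f),\,m'\in\operatorname{supp}(r)}\overline{qm'}$. By Lemma~\ref{D.mainlemma}, any summand $\overline{qm'}$ that is not already reduced satisfies $\overline{qm'}\prec m'\preceq M\prec pM$, so it cannot equal the reduced monomial $pM$; thus every occurrence of $pM$ in the sum comes from a genuine concatenation $qm'=pM$, which forces $q$ to be a prefix of the word $pM$, hence comparable to $p$. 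If $q$ were a proper prefix of $p$, say $p=qv$ with $v$ a nonempty suffix of $p$ (so $v$ ends in $x$), the complementary factor would be $m'=vM$, and $vM\succ M$ contradicts the choice of $M$; hence $p$ is a prefix of $q$. Since $fr=0$ forces $pM$ to occur an even number of times while the pair $(p,M)$ accounts for exactly one occurrence, some occurrence has $p$ as a \emph{proper} prefix of $q$. (In particular, if $\operatorname{supp}(r)=\{M\}$ this argument already produces a contradiction, as then no further occurrence of $pM$ is possible.)

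To finish, choose $p$ of maximal length in $\operatorname{supp}(f)$ among the monomials ending in $x$, and apply the lemma to obtain $q\in\operatorname{supp}(f)$ with $p$ a proper prefix, so $\ell(q)>\ell(p)$. If such a $q$ again ended in $x$ we could iterate and produce an infinite strictly length-increasing chain inside the finite set $\operatorname{supp}(f)$, a contradiction. \emph{The case where the monomial delivered by the lemma ends in $a$ is the point that needs real care, and is the main obstacle.} Here one uses that $q=pu$ still carries the $x$-ending prefix $p$, so by the rigidity above every product $\overline{qm'}$ retains $p$ as a prefix; feeding this back, together with the explicit factorization of $M$ produced by the lemma, into a secondary induction --- stripping the prefix $p$ from $f$ via the injective map $ph\mapsto h$, which strictly decreases either $\#\operatorname{supp}(f)$ or the relevant length parameter --- reduces the situation to a strictly smaller instance. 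This gives $r_R(f)=0$, and with it $r_R(g)=0$ whenever $1\in\operatorname{supp}(g)$.
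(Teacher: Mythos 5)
Your opening ``leading monomial'' analysis at $pM$ is correct and is essentially the first half of the paper's argument: since $p$ ends in $x$, the concatenation $pM$ is reduced and satisfies $M\prec pM$, so by Lemma~\ref{D.mainlemma} no reducible product can collide with it; any collision is a genuine concatenation $qm'=pM$, and the case where $q$ is a proper prefix of $p$ is excluded because the complementary factor $vM$ (with $v$ ending in $x$) would dominate $M$ in $\operatorname{supp}(r)$. Choosing $p$ of maximal length among the monomials of $\operatorname{supp}(f)$ ending in $x$ also correctly disposes of the case where the colliding $q$ again ends in $x$. But the remaining case --- $q=pu$ with $u$ ending in $a$ --- is exactly where the theorem is actually proved, and your proposal does not prove it. You flag it yourself as ``the main obstacle'' and then offer a ``secondary induction'' by ``stripping the prefix $p$ from $f$''. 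That reduction is not well founded: a monomial of $fr$ beginning with $p$ need not arise from a monomial of $f$ beginning with $p$, because a monomial $m_3\in\operatorname{supp}(f)$ that is a \emph{proper prefix} of $p$ can combine with some $m_4\in\operatorname{supp}(r)$ to produce a word starting with $p$. So the map $ph\mapsto h$ does not carve out a smaller instance of the same annihilation problem, no decreasing parameter is actually exhibited, and the argument does not close.

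What the paper does at this point is different and essential. When $q=pu$ with $u\in Ra$, the equation $um'=M$ forces $m'$ to be a right-truncation $M(k)$ of $M$ beginning just after an $a$; so one learns that some $M(k)$ lies in $\operatorname{supp}(r)$. One then takes $l$ \emph{minimal} with $M(l)\in\operatorname{supp}(r)$ and shows that the single monomial $pM(l)$ occurs exactly once in $fr$: a reducible collision is again killed by $M\prec pM(l)$ and Lemma~\ref{D.mainlemma}; a collision $m_3m_4=pM(l)$ with $m_3=pm_0$ forces $m_0\in Ra$ (by the length-maximality of $p$ among monomials in $Rx$) and hence $m_4=M(t)$ with $t<l$, contradicting the minimality of $l$; and a collision with $m_3$ a proper prefix of $p$ forces $m_4=m_0M(l)$ with $m_0\in Rx$, whence $M\prec m_4$, contradicting the maximality of $M$. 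Your proposal contains neither the passage to the minimal truncation $M(l)$ nor any substitute for it, so there is a genuine gap. (Your treatment of the ``in particular'' clause via $r_R(g)\subseteq r_R(xg)$ is fine and matches the paper.)
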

	\begin{proof}
		Suppose $h$ is any nonzero element in $R$. We will show that $fh \neq 0$.\\
		Let $M = x^{i_{n}}a\cdots x^{i_{2}}ax^{i_1}a^{i_0}$ be the maximal monomial in the support of $h$, and $m$ be a monomial of the largest length in the support of $f$ that is in $Rx$. 
		
		If $mM$ appears only once in the support of $fh$, then we have nothing to prove. If there exist monomials $m_1$ in the support of $f$ and $m_2$ in the support of $h$ such that $mM = m_1 m_2$, then $m_1m_2$ is irreducible because otherwise, by Lemma \ref{D.mainlemma}, $mM = m_1m_2 \prec m_2 \prec M$, a contradiction. Due to the maximality of $M$, the monomial $m_2$ is a right subword of $M$. Also $m_2 \prec M$ because otherwise $mM = m_1M$ implies that $m = m_1$. So the length of $m_1$ is greater than that of $m$, and as $m$ is a monomial of the largest length in $Rx$,  $m_1 \in  Ra$. This implies $m_2 = M(k)$ for some $k \in \{1, \ldots, n\}$. Let $l \in \{1,\ldots, n\}$ be the smallest integer such that $M(l)$ is in the support of $h$. We will show that $mM(l)$ appears in the support of $fh$ only once.
		
		Suppose $mM(l) = m_3m_4$ for some monomial $m_3$ in the support of $f$ and $m_4$ in the support of $h$ such that $(m_3, m_4) \neq (m, M(l))$. First note that $m_3m_4$ is irreducible because otherwise, as $m \in Rx$, we have $M \prec mM(l) = m_3m_4  \prec  m_4$ by Lemma \ref{D.mainlemma}, a contradiction. If $m_3 = mm_0$ for some monomial $m_0$, then $m_0  \in Ra$ as $m$ is the largest monomial in the support of $f$ that is in $Rx$. But then $m_4 = M(t)$ for some $t < l$, violating the minimality of $M(l)$. Lastly, if $m_4 = m_0M(l)$ for some monomial $m_0$, then as $mM(l) \in RxM(l)$, so $m_0 \in Rx$. But this implies  $M \prec m_4$,  a contradiction.  
		So $mM(l)$ is a nonzero monomial in the support of $fh$ and so $fh \neq 0$. So $r_{ R}(f)=0$.
		
		For the last part, if $g \in R$ has $1$ in its support, then $xg$ has $x$ in its support. For any element $g_1 \in R$ if $gg_1 = 0$ then $xgg_1 = 0$ and by the first part, $g_1 = 0$. Thus $r_R(g) = 0$. 
	\end{proof} 

	\begin{cor}\label{D.rann}
		If $f\in R$ such that $r_R(f) \neq 0$, then $f \in Ra$. Moreover, for such an $f \in R$, there exists $k \in \mathbb{N}$ such that $r_R(f)= r_R(a^k)$.
	\end{cor}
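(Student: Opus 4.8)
The plan is to deduce the membership $f\in Ra$ directly from Theorem~\ref{D.mainthm}, and then to bootstrap this to pin down $r_R(f)$.

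First I would prove the membership claim. Assume $r_R(f)\neq 0$. By Theorem~\ref{D.mainthm} no monomial in $\operatorname{supp}(f)$ can lie in $Rx$, and by the ``in particular'' part of that theorem $1\notin\operatorname{supp}(f)$. But every reduced monomial is either $1$, or ends in the letter $x$ (so it lies in $Rx$), or ends in the letter $a$ (so it lies in $Ra$). Hence every $m\in\operatorname{supp}(f)$ has the form $m=m'a$ with $m'$ again a reduced monomial, and summing over $\mathbb{F}_2$ gives $f=\bigl(\sum_{m}m'\bigr)a\in Ra$.

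For the ``moreover'' part we may assume $f\neq 0$. The elementary observation driving everything is that right multiplication by $a$ in $R$ is reduction-free: adjoining copies of the letter $a$ at the end of a reduced monomial can never create an occurrence of the pattern $a^2x$, so $m a^j$ is already reduced for every reduced monomial $m$ and every $j\ge 0$. Consequently, if $f\in Ra^j$, say $f=ga^j$, then $\operatorname{supp}(f)=\{\,m a^j : m\in\operatorname{supp}(g)\,\}$, so every monomial in $\operatorname{supp}(f)$ has length at least $j$. Since $\operatorname{supp}(f)$ is finite and nonempty, and since $f\in Ra$ by the first part, the set $\{\,j\ge 1 : f\in Ra^j\,\}$ is nonempty and bounded above; let $k$ be its largest element and write $f=ha^k$. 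By maximality $h\notin Ra$, so the first part (contrapositively) forces $r_R(h)=0$. Now $r_R(a^k)\subseteq r_R(ha^k)=r_R(f)$ is clear, while conversely $ft=0$ gives $a^kt\in r_R(h)=0$, i.e.\ $t\in r_R(a^k)$; hence $r_R(f)=r_R(a^k)$.

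The essential inputs are just Theorem~\ref{D.mainthm} and the normal form for monomials in $R$, so there is no real obstacle; the point that needs care is the reduction-free behaviour of right multiplication by $a$, since this is precisely what simultaneously guarantees that the maximal $k$ exists and that the factorisation $f=ha^k$ interacts correctly with right annihilators.
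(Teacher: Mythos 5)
Your proof is correct and takes essentially the same route as the paper's: deduce $f\in Ra$ from Theorem~\ref{D.mainthm}, factor $f=ha^k$ with $h\notin Ra$, and apply the theorem (contrapositively) once more to get $r_R(h)=0$ and hence $r_R(f)=r_R(a^k)$. Your careful justification that a maximal such $k$ exists --- via the observation that right multiplication by $a$ creates no new occurrences of $a^2x$ and so preserves reducedness --- fills in a point the paper leaves implicit, but the argument is the same.
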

	\begin{proof}
		In view of Theorem \ref{D.mainthm}, it is easy to see $f \in Ra$. Suppose $f=f_1a^k$ such that $f_1 \notin Ra$. Clearly $r_R(a^k) \subseteq r_R(f)$. Let $h$ be a nonzero element in $R$ such that $fh=0$. This implies $f_1a^kh=0$. Since $f_1 \notin Ra$, again by Theorem  \ref{D.mainthm}, $a^kh=0$ implies $h \in r_R(a^k)$. 
	\end{proof}
	Next, we identify all the nilpotent elements of the ring $R$ and also prove that they form a subring.
	\begin{thm}\label{D.nr}
		In the notations of Lemma \ref{D.Nra}, $\rm{N}$$(R)$ = $\rm{N}$$^r_R$$(a)$. In particular, $R$ is an NR ring.
	\end{thm}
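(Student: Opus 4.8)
The inclusion $\mathrm{N}^r_R(a)\subseteq\mathrm{N}(R)$ is immediate, since $\mathrm{N}^r_R(a)$ was defined in Lemma~\ref{D.Nra} as a set of nilpotent elements; moreover, once $\mathrm{N}(R)=\mathrm{N}^r_R(a)$ is known, Lemma~\ref{D.Nra} gives at once that $\mathrm{N}(R)$ is a subring, i.e.\ $R$ is NR. So the whole point is the inclusion $\mathrm{N}(R)\subseteq\mathrm{N}^r_R(a)$. Let $0\ne f\in\mathrm{N}(R)$, say $f^m=0$ with $m$ least; then $0\ne f^{m-1}\in r_R(f)$, so $r_R(f)\ne 0$ and Corollary~\ref{D.rann} gives $f\in Ra$, i.e.\ every monomial of $f$ ends in the letter $a$. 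Deleting that final $a$ from each monomial produces $r_1$ with $r_1a=f$. Put $n_1:=ar_1$; since $r_1a=f$ is nilpotent, so is $n_1$. If $n_1=0$ we stop; otherwise $n_1$ is a nonzero nilpotent, so $r_R(n_1)\ne 0$, Corollary~\ref{D.rann} gives $n_1\in Ra$, and deleting the final $a$ from its monomials gives $r_2$ with $r_2a=n_1=ar_1$. Iterating, we obtain $r_1,r_2,\dots$ with $f=r_1a$ and $ar_j=r_{j+1}a$ for every $j$ (both sides being $n_j:=ar_j$), each $n_j$ nilpotent; and, extending the sequence by zeros after the first vanishing term, $f\in\mathrm{N}^r_R(a)$ will follow once we show $n_k=0$ for some $k$.

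For the termination I plan a length argument on reduced monomials. Two observations are needed. (i) Every monomial of $n_j=ar_j$ begins with $a$, since prepending an $a$ to a reduced monomial and performing reductions $a^2x\to a$ can never remove the leading $a$. (ii) When $n_j\ne 0$ it lies in $Ra$, so every monomial of $n_j$ also ends in $a$. Passing from $n_j$ to $n_{j+1}$ therefore replaces each $m\in\operatorname{supp}(n_j)$ (beginning and ending with $a$) by the reduced form of $a\cdot m^{-}$, where $m^{-}$ is $m$ with its final $a$ deleted. I will check that the reduced length of $a\cdot m^{-}$ is at most $l(m)$, with equality precisely when no reduction $a^2x\to a$ is used, i.e.\ precisely when $m^{-}$ — equivalently $m$ — does not begin with $ax$; as $m$ is reduced and begins with $a$, this forces $m$ to be a power of $a$. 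Hence $L_j:=\max\{\,l(m):m\in\operatorname{supp}(n_j)\,\}$ is non-increasing for $j\ge 1$, so it is eventually equal to some fixed $L\ge 1$; and for all large $j$ any monomial of length $L$ in $n_j$ must be $a\cdot m^{-}$ for a length-$L$ monomial $m$ of $n_{j-1}$ to which no reduction applies, whence $m=a^L$ and the monomial is $a^L$. Thus for all large $j$ the monomial $a^L$ is the unique longest monomial of $n_j$.

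It remains to rule out that a nonzero nilpotent $g\in R$ has a power $a^L$ of $a$ as its unique longest monomial. Write $g=a^L+g'$ with $l(m')<L$ for every $m'\in\operatorname{supp}(g')$. An induction on $k$ shows $a^{kL}\in\operatorname{supp}(g^k)$: expanding $g^{k+1}=g^k g$, every term other than $a^{kL}\cdot a^L=a^{(k+1)L}$ is a product of two monomials whose lengths sum to less than $(k+1)L$, so — reductions only shorten — none of them can produce the length-$(k+1)L$ monomial $a^{(k+1)L}$, and over $\mathbb{F}_2$ nothing cancels its coefficient $1$. Hence $g^k\ne 0$ for every $k$, contradicting that $g$ is nilpotent. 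Now if no $n_j$ vanished, then by the previous paragraph $n_j$ would, for all large $j$, be a nonzero nilpotent whose unique longest monomial is $a^L$, contradicting what was just shown. Therefore $n_k=0$ for some $k$, and $f\in\mathrm{N}^r_R(a)$, as desired. The one genuinely delicate point is the careful tracking of the reductions $a^2x\to a$ when an $a$ is prepended — in particular, pinning the powers of $a$ as the only length-preserving case — together with the routine check (over $\mathbb{F}_2$) that the decisive monomials are not killed by cancellation; I expect that bit of length-bookkeeping to be the main obstacle.
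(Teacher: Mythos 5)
Your proposal is correct, and its skeleton is the same as the paper's: a nonzero nilpotent $f$ has nonzero right annihilator, hence lies in $Ra$ by Corollary~\ref{D.rann}; one writes $f=r_1a$, passes to $ar_1=r_2a$, and iterates, the only real issue being termination of this $a$-shifting process. The difference lies in how termination is handled. The paper disposes of it with a brief remark (monomials of $r_j$ ending in $x$ disappear from $ar_j$ by reduction or cancellation, so eventually $ar_k\in\mathbb{F}_2[a]$ and nilpotency forces $ar_k=0$), whereas you run a length argument: prepending $a$ and reducing never removes the leading $a$ and never increases reduced length, with length preserved only for pure powers of $a$; hence the maximal support length of $n_j$ is non-increasing, eventually constant equal to some $L$ with $a^L$ the unique longest monomial, and you then show directly that a nonzero nilpotent cannot have a power of $a$ as its unique longest monomial, contradicting non-termination. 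This is a legitimate, and in fact more fully justified, way to close the argument than the paper's sketch. One small point to make explicit: in your final induction the hypothesis should be strengthened to ``every monomial of $g^k$ has length at most $kL$, and the only possible one of length $kL$ is $a^{kL}$''; this is immediate (monomials of $g^k$ are reduced products of $k$ monomials of $g$, reductions only shorten, and equality forces every factor to be $a^L$), but it is what your phrase ``lengths sum to less than $(k+1)L$'' silently uses for the pairs $(m_1,a^L)$ with $m_1\neq a^{kL}$.
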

	\begin{proof}
		In view of Theorem \ref{D.mainthm}, $\rm{N}$$(R) \subseteq Ra$. It is also clear that $\mathbb{F}_2[a] \cap \rm{N}$$(R) = 0$. Therefore, any nonzero nilpotent element of $R$ is of the form $r_1a$ and it must have at least one monomial in its support that is not in $\mathbb{F}_2[a]$. Then $ar_1$ is also nilpotent, and if $ar_1 = 0$, we are through. Note that any monomial of $r_1$ that was in $Rx$ will vanish from $ar_1$ either by cancellation or reduction. We continue this process of shifting $a$ to the left and in the process  $ar_k \in \mathbb{F}_2[a]$  for some $k \in \mathbb{N}$. As $ar_k$ is nilpotent, so $ar_k = 0$ implying that $\rm{N}$$(R) = \rm{N}$$^r_R$$(a)$
	\end{proof}
	\begin{cor}\label{D.nr1}
		An NR ring may not be right Dischinger.
	\end{cor}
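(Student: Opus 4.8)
The plan is to use the ring $R$ of (3.1) itself. By Theorem \ref{D.nr} it is an NR ring, so the whole content of the corollary reduces to checking that $R$ is \emph{not} right Dischinger, the witnessing element being $a$.

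First observe that $a$ is a right strongly regular element of $R$, since $a = a^2x$ by construction. Suppose, toward a contradiction, that $R$ is right Dischinger; then, by the definition of right Dischinger (condition (2) of Proposition \ref{D.Def}), $a$ is strongly regular, so $a = ya^2$ for some $y \in R$. Hence $axa = ya^2xa = ya^2 = a$ (using $a = a^2x$), so $(ax)^2 = (axa)x = ax$ and $ax$ is an idempotent of $R$ (this is essentially the chain $(1)\Rightarrow(7)\Rightarrow(6)$ of Proposition \ref{D.NaSc}).

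It remains to see that $ax$ is a \emph{nontrivial} idempotent, which I would deduce from the basis description of $R$ recalled at the start of Section 3: $ax$ is a reduced monomial, hence a nonzero member of the $\mathbb{F}_2$-basis $\mathcal{B}$ of $R$, and being of positive length it is distinct from $1$; thus $ax \neq 0$ and $ax \neq 1$. (Equivalently, $ax = 0$ would force $a = a(ax) = 0$, and $ax = 1$ is excluded directly by the basis.) This contradicts \cite[Theorem 2.3]{Dit}, recalled in the Introduction, according to which $R$ has only trivial idempotents. Hence $R$ is NR but not right Dischinger, which also settles the question raised just after Corollary \ref{D.NI}. There is no genuine obstacle here once Theorem \ref{D.nr} is in hand; the only step requiring a word of care is the nontriviality of $ax$, which is immediate from the basis of $R$.
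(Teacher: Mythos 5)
Your proof is correct and follows the paper's intended route: the corollary is immediate from Theorem \ref{D.nr} (showing $R$ is NR) together with the fact, already recorded in the Introduction via \cite[Theorem 2.3]{Dit}, that $R$ is not right Dischinger because $a=a^2x$ yet $ax$ cannot be a (necessarily nontrivial) idempotent in a ring with only trivial idempotents. Your explicit verification that $ax$ is a reduced basis monomial distinct from $0$ and $1$ is a harmless elaboration of the same argument.
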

	\begin{cor}\label{D.lann}
		Let $f$ be a nonzero element in $R$ such that $l_R(f) \neq 0$, then $a^kf = 0$ for some $k \in \mathbb{N}$.  If $n \in \mathbb{N}$ is smallest integer such that $a^nf = 0$, then $l_R(f) = Ra^n$.
	\end{cor}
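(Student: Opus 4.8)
The plan is to mirror the proof of Corollary~\ref{D.rann}, using that corollary together with Theorem~\ref{D.mainthm} to control which elements can annihilate $f$ on the left. First I would produce the exponent $k$: since $l_R(f)\neq 0$, pick a nonzero $g$ with $gf=0$, so that $f\in r_R(g)$; as $f\neq 0$ this gives $r_R(g)\neq 0$, and Corollary~\ref{D.rann} applied to $g$ yields some $k\in\mathbb{N}$ with $r_R(g)=r_R(a^k)$. Hence $a^kf=0$, so the least $n$ with $a^nf=0$ exists, and $n\geq 1$ because $f\neq 0$.

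Next I would identify $l_R(f)$ with $Ra^n$. The inclusion $Ra^n\subseteq l_R(f)$ is immediate from $a^nf=0$. For the reverse inclusion I would take an arbitrary nonzero $g\in l_R(f)$ and factor out the least trailing power of $a$ occurring among the reduced monomials in $\supp(g)$, writing $g=g_1a^m$ with $m\geq 0$ and $g_1\notin Ra$; such a factorisation exists precisely because $\mathcal{B}$ is a basis of reduced monomials, the normal form being $x^{i_n}a\cdots x^{i_1}a^{i_0}$. Since $g_1\notin Ra$, its support contains a reduced monomial that ends in $x$ (hence lies in $Rx$) or else equals $1$, so in either case Theorem~\ref{D.mainthm} forces $r_R(g_1)=0$. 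From $0=gf=g_1\,(a^mf)$ we then get $a^mf=0$, hence $m\geq n$ by minimality of $n$, and therefore $g=(g_1a^{m-n})a^n\in Ra^n$. This finishes the argument.

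I do not anticipate a real obstacle: all the substantive work is already contained in Theorem~\ref{D.mainthm} and Corollary~\ref{D.rann}. The only points needing a little care are the clean factorisation $g=g_1a^m$ with $g_1\notin Ra$ and the remark that a reduced monomial which does not end in $a$ must be $1$ or lie in $Rx$, both of which are routine consequences of the normal form for reduced monomials.
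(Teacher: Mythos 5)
Your proof is correct and follows essentially the same route as the paper: the paper also gets $a^kf=0$ by writing a nonzero left annihilator as $h_1a^k$ with $h_1\notin Ra$ and invoking Corollary \ref{D.rann}/Theorem \ref{D.mainthm} to conclude $r_R(h_1)=0$, and the identification $l_R(f)=Ra^n$ is exactly the trailing-power argument you spell out (which the paper leaves as ``clear''). Your only variation is deriving the existence of $k$ from the clause $r_R(g)=r_R(a^k)$ of Corollary \ref{D.rann} rather than refactoring $g$, which is an equivalent use of the same result.
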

	\begin{proof}
		Suppose $h$ is a nonzero element in $R$ such that $hf = 0$. As $f \neq 0$, by Corollary \ref{D.rann}, we have $h = h_1a^k$ for some $k \in \mathbb{N}$ such that $h_1 \notin Ra$. Since  $h_1 \notin Ra$ again by Corollary \ref{D.rann} , $r_R(h_1) = 0$. As $h_1a^kf = 0$, so $a^kf=0$. If $n \in \mathbb{N}$ is the smallest integer such that $a^nf=0$, then it is clear that $l_R(f) = Ra^n$. 	
	\end{proof}
	Recall that an element $s$ in a ring $S$ is called a left zero-divisor if $r_S(s) \neq 0$.
	\begin{cor}\label{D.zerodivisors}
		\textup{(1)} The set of left zero divisors of $R$ is $Ra$.\\[2mm]
		\textup{(2)} The set of the right zero divisors of $R$ is $\cup_{n = 1}^{\infty} r_R(a^n)$.
	\end{cor}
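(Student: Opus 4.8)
The plan is to read off both parts directly from the annihilator computations already in hand, namely Corollary~\ref{D.rann} and Corollary~\ref{D.lann}, after exhibiting by hand the nonzero witnesses needed for the ``easy'' inclusions; beyond Theorem~\ref{D.mainthm} and its corollaries, essentially nothing new is required.

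For part~(i), recall that $f$ is by definition a left zero-divisor exactly when $r_R(f)\neq 0$. One inclusion is then precisely Corollary~\ref{D.rann}, which says a left zero-divisor lies in $Ra$. For the reverse inclusion, I would rewrite the defining relation as $a(1-ax)=0$ and note that $1-ax\neq 0$ because $1\in\operatorname{supp}(1-ax)$. Consequently, for every $f=ga\in Ra$ we have $f(1-ax)=g\cdot a(1-ax)=0$, so $1-ax$ is a nonzero element of $r_R(f)$ and $f$ is a left zero-divisor. Since $0\in Ra$ and $r_R(0)=R\neq 0$, the zero element is covered as well, so the two descriptions coincide.

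For part~(ii), $f$ is a right zero-divisor exactly when $l_R(f)\neq 0$. If $f\neq 0$ is a right zero-divisor, then Corollary~\ref{D.lann} provides $k\in\mathbb{N}$ with $a^kf=0$, so that $f\in r_R(a^k)\subseteq\bigcup_{n\geq 1}r_R(a^n)$; and $0$ lies in every $r_R(a^n)$, so the set of right zero-divisors is contained in $\bigcup_{n\geq 1}r_R(a^n)$. Conversely, if $f\in r_R(a^n)$ for some $n\geq 1$ then $a^nf=0$, and since $a^n$ is a reduced monomial it is a nonzero element of the basis $\mathcal{B}$, whence $a^n\in l_R(f)\setminus\{0\}$ and $f$ is a right zero-divisor. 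I do not expect any genuine obstacle here: the only point that needs even a moment's attention is checking that the witnesses $1-ax$ and $a^n$ are nonzero, which is immediate from the fact that the reduced monomials form an $\mathbb{F}_2$-basis of $R$.
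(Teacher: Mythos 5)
Your proof is correct and follows essentially the same route as the paper: part (i) combines Corollary~\ref{D.rann} with the identity $ra(1-ax)=0$ (and $1-ax\neq 0$), and part (ii) is read off from Corollary~\ref{D.lann}; you merely spell out the easy witnesses that the paper treats as immediate.
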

	\begin{proof}
		(1) It is clear from Corollary \ref{D.rann} and from the fact that $ra(1-ax) = 0$ for every $r \in R$.\\[3mm]
		(2) This follows from Corollary \ref{D.lann}.
	\end{proof}
	Since there is no infinite ascending chain of left ideals of $R$ of the form $Ra^n$, $n\in \mathbb{N}$, the following result is immediate from Corollary \ref{D.lann}. 
	\begin{cor}\label{D.acc}
		The ring $R$ satisfies the ascending chain condition on the left annihilators of subsets of $R$.
	\end{cor}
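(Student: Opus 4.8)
The plan is to expand the one-line argument hinted at just before the statement: first pin down exactly which left ideals can arise as left annihilators of subsets of $R$, and then read the chain condition off that description. Everything rests on Corollary~\ref{D.lann} together with the identity $l_R(S)=\bigcap_{s\in S} l_R(s)$ valid for any subset $S\subseteq R$.

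First I would treat single elements. We have $l_R(0)=R=Ra^{0}$, and for $s\neq 0$ Corollary~\ref{D.lann} tells us that $l_R(s)$ is either $0$ or equals $Ra^{n}$, where $n\geq 1$ is the least integer with $a^{n}s=0$. Since $R$ has an identity and $a^{m}=a^{\,m-n}a^{n}\in Ra^{n}$ whenever $m\geq n$, the principal left ideals form a descending chain $R=Ra^{0}\supseteq Ra\supseteq Ra^{2}\supseteq\cdots$ (in fact proper at each step, since $Ra^{n}=Ra^{n+1}$ together with $a=a^{2}x$ would force $a$ to be strongly regular by Lemma~\ref{D.Lemma1}, hence $ax$ a nontrivial idempotent, contradicting \cite[Theorem~2.3]{Dit}; but properness will not actually be needed). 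Therefore any intersection $\bigcap_{n\in T}Ra^{n}$ over an index set $T\subseteq\{0,1,2,\dots\}$ is $Ra^{\max T}$ when $T$ is bounded and is $\bigcap_{n\geq 1}Ra^{n}$ otherwise. Combining this with $l_R(S)=\bigcap_{s\in S}l_R(s)$ — which is $0$ as soon as some $s\in S$ has $l_R(s)=0$, and otherwise an intersection of ideals of the form $Ra^{n}$ — the family $\mathcal L$ of left annihilators of subsets of $R$ satisfies
\[
\mathcal L\ \subseteq\ \{0\}\ \cup\ \{\,Ra^{n}: n\geq 0\,\}\ \cup\ \bigl\{\,\textstyle\bigcap_{n\geq 1}Ra^{n}\,\bigr\}.
\]

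Finally I would read off the ACC. The set on the right is totally ordered by inclusion, and its only members not of the form $Ra^{n}$ with $n$ finite are $0$ and $\bigcap_{n\geq 1}Ra^{n}$, both of which lie below every $Ra^{n}$. Hence in a strictly ascending chain $I_{1}\subsetneq I_{2}\subsetneq\cdots$ of members of $\mathcal L$, at most the first two terms can fall outside $\{Ra^{n}:n\ \text{finite}\}$, and once some term equals $Ra^{N}$ with $N$ finite there are at most $N+1$ members of $\mathcal L$ strictly above it (those among $Ra^{N-1},\dots,Ra,R$). So every strictly ascending chain of left annihilators of subsets of $R$ is finite, which is exactly the asserted chain condition.

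There is no genuine obstacle — the authors rightly call this immediate. The only place where a little care is warranted is the bookkeeping step: checking that closing the single-element annihilators $\{0\}\cup\{Ra^{n}\}$ under arbitrary intersections produces nothing new beyond the single further ideal $\bigcap_{n\geq 1}Ra^{n}$, i.e. handling an unbounded index set $T$ correctly.
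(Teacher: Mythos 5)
Your proof is correct and follows essentially the same route as the paper: the paper derives the corollary immediately from Corollary~\ref{D.lann} together with the observation that there is no infinite ascending chain among the left ideals $Ra^n$, and you simply make explicit the bookkeeping for annihilators of subsets via $l_R(S)=\bigcap_{s\in S}l_R(s)$. (The only quibbles are cosmetic: there are $N$, not $N+1$, ideals $Ra^{N-1},\dots,Ra,R$ above $Ra^N$, and your parenthetical on strict properness of the chain is, as you note, unnecessary.)
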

	In Corollary \ref{D.goldie}, we proved that a left Goldie ring is Dischinger. Hence, the following result shows that the ascending chain condition on left annihilators alone is not sufficient.
	\begin{cor}\label{D.azacc}
		A ring satisfying the ascending chain condition on left annihilators of subsets may not be right Dischinger.
	\end{cor}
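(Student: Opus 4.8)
The plan is to reuse the ring $R = \mathbb{F}_2\langle a,x : a = a^2x\rangle$ of (3.1) as the required counterexample, so that nothing new needs to be built. Two ingredients are needed, and both are already available: that $R$ satisfies the ascending chain condition on left annihilators, and that $R$ is not right Dischinger.

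The first ingredient is precisely Corollary \ref{D.acc}, which itself followed from the identification of left annihilators in Corollary \ref{D.lann} together with the fact that the left ideals $Ra^n$ form a strictly descending chain, leaving no room for an infinite ascending chain of left annihilators of subsets.

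For the second ingredient I would argue as follows. Since $a = a^2x$, the element $a$ is right strongly regular, so if $R$ were right Dischinger then $a$ would be strongly regular; because $R$ has only trivial idempotents by \cite[Theorem 2.3]{Dit}, $R$ is Dedekind-finite, and Proposition \ref{D.NaSc} (the implication $(1)\Rightarrow(6)$, via $Ra=Ra^2$) would then force $ax$ to be an idempotent. But $ax$ is a reduced monomial of depth $1$, hence different from $1$, and it is nonzero since $ax = 0$ would give $a = a^2x = a(ax) = 0$; thus $ax$ would be a nontrivial idempotent, contradicting \cite[Theorem 2.3]{Dit}. Hence $a$ is not strongly regular and $R$ is not right Dischinger.

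Combining the two ingredients yields the corollary. I do not expect any real obstacle here; the only point that needs a moment's care is verifying $ax \notin \{0,1\}$ in $R$, which is immediate from the normal-form basis $\mathcal{B}$ recalled at the start of this section, and the observation (already used for Corollary \ref{D.duo} and in the paragraph preceding it) that a ring with only trivial idempotents is automatically Dedekind-finite.
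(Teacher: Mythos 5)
Your proposal is correct and takes essentially the same route as the paper, which treats this corollary as immediate: the ring $R$ of (3.1) satisfies the ascending chain condition on left annihilators by Corollary \ref{D.acc}, yet is not right Dischinger, a fact the paper simply cites from \cite[Theorem 2.3]{Dit}. Your self-contained verification of the latter via Proposition \ref{D.NaSc} and the triviality of idempotents is sound; the only trivial quibble is that $ax$ is a reduced monomial of depth $2$ (not $1$) in the paper's convention, which is immaterial since all you need is $ax \notin \{0,1\}$.
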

	Now we have sufficient results to prove that a left Dischinger ring may not be right Dischinger. As the ring $R$ is not right Dischinger, it is enough to prove the following result.
	\begin{thm}\label{D.symm}
		$R$ is left Dischinger. Moreover, $Rf = Rf^2$, for some $f\in R$, implies either $f = 0$ or $f$ is a unit.
	\end{thm}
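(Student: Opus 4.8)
The plan is to first record that $R$ has an identity and, by \cite[Theorem~2.3]{Dit}, only trivial idempotents, hence is Dedekind-finite: if $bc=1$ then $cb$ is idempotent, so $cb\in\{0,1\}$, and $cb=0$ would force $c=(cb)c=0$, contradicting $bc=1$. Granting this, the assertion ``$R$ is left Dischinger'' reduces to the \emph{moreover} clause: if $f$ is left strongly regular, say $f=gf^{2}$, then $f\in Rf^{2}$, so $Rf=Rf^{2}$; by the moreover clause $f=0$ or $f$ is a unit, and either way $f$ is strongly regular ($0=0^{2}\!\cdot 0$, or $f=f^{-1}f^{2}=f^{2}f^{-1}$), so by the left-handed form of Proposition~\ref{D.Def}, $R$ is left Dischinger. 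Thus everything comes down to proving that $Rf=Rf^{2}$ forces $f=0$ or $f$ a unit.

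Here is how I would prove that. Write $f=gf^{2}$, so $(1-gf)f=0$; assume $f\neq 0$. If $gf=1$, Dedekind-finiteness gives $fg=1$, so $f$ is a unit. So assume $gf\neq 1$; I shall derive a contradiction. Since $0\neq 1-gf\in l_{R}(f)$, Corollary~\ref{D.lann} provides the least $n\geq 1$ with $a^{n}f=0$ and with $l_{R}(f)=Ra^{n}$; write $gf+ca^{n}=1$. As $ca^{n}\in Ra^{n}\subseteq Ra$, no monomial of $ca^{n}$ equals $1$, so $1\in\operatorname{supp}(gf)$, whence $r_{R}(gf)=0$ by the last part of Theorem~\ref{D.mainthm}, and therefore $r_{R}(f)=0$. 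Consequently $f\notin Ra$, for $f=ra$ would give $f(1-ax)=ra-ra^{2}x=0$, i.e.\ $r_{R}(f)\neq 0$ (cf.\ Corollary~\ref{D.zerodivisors}(i)). Hence $\operatorname{supp}(f)$ contains $1$ or a monomial lying in $Rx$.

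It remains to contradict this package: $f=gf^{2}$, $f\neq 0$, $r_{R}(f)=0$, $l_{R}(f)=Ra^{n}$ with $n$ minimal, $gf+ca^{n}=1$, and $\operatorname{supp}(f)\not\subseteq Ra$. One special case is clean and sets the tone: if $n=1$ and $f-1\in Ra$, write $f=1+ra$; then $af=0$ gives $ara=a$, so $a$ is (von Neumann) regular, and this together with $a=a^{2}x$ and Dedekind-finiteness makes $ax$ a nontrivial idempotent (via \cite[Lemma~3.3]{Dit} and Proposition~\ref{D.NaSc}), contradicting \cite[Theorem~2.3]{Dit}. The hard part will be the general configuration — where $f$ has a monomial in $Rx$, or $n\geq 2$ — because the purely algebraic identities in play ($f=gf^{2}$, $a^{n}f=0$, $gf+ca^{n}=1$) are mutually consistent and produce no contradiction by themselves, so one must descend to the combinatorics of $R$. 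There I would imitate the extremal-monomial analysis of \cite[Theorem~2.3]{Dit}: since $\operatorname{supp}(f)=\operatorname{supp}(gf^{2})$, use Lemma~\ref{D.mainlemma} and Theorem~\ref{D.mainthm} to locate a monomial of $f$ (its presence pinned down by $a^{n}f=0$ together with $\operatorname{supp}(f)\not\subseteq Ra$) that cannot occur, after cancellation, among the reduced products $m_{0}m_{1}m_{2}$ with $m_{0}\in\operatorname{supp}(g)$ and $m_{1},m_{2}\in\operatorname{supp}(f)$. I expect this support-and-length bookkeeping to be the principal obstacle, running parallel to the more delicate portions of \cite{Dit}.
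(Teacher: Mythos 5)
Your opening reduction is logically valid (units and $0$ are strongly regular, so the ``moreover'' clause plus the left-handed Proposition~\ref{D.Def} gives left Dischinger), and your preliminary analysis of a putative counterexample $f=gf^2\neq 0$ is correct: $l_R(f)=Ra^n$, $1\in\supp(gf)$, hence $r_R(f)=0$ and $f\notin Ra$; your base case ($n=1$, $f-1\in Ra$) also works. But the proof stops exactly where the real work begins. You explicitly leave the general configuration open, and ``imitate the extremal-monomial analysis of \cite{Dit}'' is a plan, not an argument. The two ingredients you are missing are supplied in the paper. First, Lemma~\ref{D.aa}: since $gf-1\in Ra^n\subseteq Ra$ and (because the product of two monomials equals $1$ only if both do) $1$ lies in the support of both $f$ and $g$, writing $f=1+f_1$, $g=1+g_1$ one gets $f_1+g_1+g_1f_1\in Ra$, and Lemma~\ref{D.aa} forces $f_1,g_1\in Ra$. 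So the case you fear --- ``$f$ has a monomial in $Rx$'' --- never occurs. Second, a descent on the minimal $n$ with $a^nf=0$: with $f=1+f_2a$, $g=1+g_2a$, cancelling $a$ on the right (legitimate since $l_R(a)=0$) in $f=gf^2$ yields $h=th^2$ for $h=1+af_2$, $t=1+ag_2$ with $a^{n-1}h=0$, contradicting minimality of a counterexample; the case $n=1$ gives $af_2=1$ and hence $a$ invertible, which is absurd. Without these two steps your argument is incomplete.

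It is also worth noting that you have taken the hard road by running the reduction backwards. The paper proves ``$R$ is left Dischinger'' directly and cheaply: every left annihilator in $R$ is of the form $Ra^k$ (Corollary~\ref{D.lann}), so $R$ satisfies the ascending chain condition on left annihilators (Corollary~\ref{D.acc}), and Azumaya's \cite[Theorem~2]{Az} (cf.\ Theorem~\ref{D.leftacc}) then gives left Dischinger. The ``moreover'' clause follows in one line: $Rf=Rf^2$ makes $f$ strongly regular, hence $f=eu$ with $e$ idempotent and $u$ a unit, and $R$ has only trivial idempotents by \cite[Theorem~2.3]{Dit}. The combinatorial proof you attempt exists in the paper only as an optional self-contained alternative (the Remark following Lemma~\ref{D.aa}); if you want to pursue your route, you must actually carry out the two steps described above.
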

	\begin{proof}
		Azumaya in \cite[Theorem 2]{Az} proved that a ring $S$ which has no infinite ascending chain of the form $l_S(s) \subseteq l_S(s^2) \subseteq \ldots$, is left Dischinger. So $R$ is left Dischinger by Corollary \ref{D.acc}. Lastly, suppose $Rf = Rf^2$ for some $f \in R$. As $R$ is left Dischinger, $fR = f^2R$ also implies that $f$ is strongly regular. So $f = eu$ for some unit $u$ and idempotent $e$ in $R$. But by \cite[Theorem 2.3]{Dit} only idempotents in $R$ are $0$ and $1$, implying that either $f=0$ or $f$ is a unit. 
	\end{proof}
	We will need the following result to prove that $R$ is a UU ring. This will also lead to a self-contained proof of the last part of Theorem \ref{D.symm}.
	\begin{lemma}\label{D.aa}
		If $f + g + fg \in Ra$ for some $f,\,g \in R$ both of which don't have $1$ in their supports, then $f \in Ra$ and $g \in Ra$.
	\end{lemma}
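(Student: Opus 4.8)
My plan is to turn the membership condition into a one-sided annihilator identity. Set $c = 1 + ax$. Then $ac = a + a^2x = 0$, so $a \in l_R(c)$, and since $c \neq 0$ Corollary \ref{D.lann} gives $l_R(c) = Ra^n$ for the least $n \in \mathbb{N}$ with $a^n c = 0$; as $ac = 0$ this is $n = 1$, i.e.\ $l_R(c) = Ra$. In other words $Ra = \{\,y \in R : yc = 0\,\}$, hence $1 + Ra = \{\,y \in R : yc = c\,\}$. Consequently the hypothesis $f + g + fg \in Ra$ is exactly the statement $(1+f)(1+g)c = c$, while the conclusion ``$f, g \in Ra$'' is exactly ``$(1+f)c = c$ and $(1+g)c = c$''.

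Next I would reduce the two conclusions to one. Because $f$ and $g$ have no $1$ in their supports, the coefficient of the monomial $1$ in each of $1+f$ and $1+g$ equals $1$, so $r_R(1+f) = r_R(1+g) = 0$ by Theorem \ref{D.mainthm}; in particular left multiplication by $1+f$ is injective on $R$. Thus from $(1+f)\bigl[(1+g)c\bigr] = c$ one reads off that $(1+g)c = c$ and $(1+f)c = c$ are equivalent (if $(1+g)c = c$ then $(1+f)c = (1+f)\bigl[(1+g)c\bigr] = c$; conversely if $(1+f)c = c$ then $(1+f)\bigl[(1+g)c\bigr] = c = (1+f)c$, and injectivity forces $(1+g)c = c$), and either one already yields the full conclusion. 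Writing $f = f_a + f_x$ and $g = g_a + g_x$ for the splittings into $a$-ending and $x$-ending parts — the reduced monomials ending in $a$ form an $\mathbb{F}_2$-basis of $Ra$, so $f \in Ra \Leftrightarrow f_x = 0$, and $f_ac = g_ac = 0$ — this means it suffices to rule out $f_x \neq 0$ (equivalently $g_x \neq 0$).

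So suppose $f_x \neq 0$ and $g_x \neq 0$. Then $gc = g_xc$ and $fc = f_xc$ are nonzero (since $r_R(c) = 0$) and end in $x$, and expanding $(1+f)(1+g)c = c$ gives the identity $(1+f)(g_xc) = f_xc$. I would project this onto the $\mathbb{F}_2$-span of the $x$-ending reduced monomials, using two elementary facts about the rewriting $a^2x \to a$: it never changes a word ending in $a$ into one ending in $x$ (so $m_1m_2$ reduces to a word ending in $x$ only when $m_2$ ends in $x$), and $m_1m_2$ is already reduced whenever $m_1$ ends in $x$. This gives
\[ g_xc = f_xc + f_x(g_xc) + f_a(g_xc), \]
and comparing $a$-ending parts shows that $f_a(g_xc)$ is itself $x$-ending. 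From here I would finish by an infinite-descent/length argument: an $x$-ending element has zero right annihilator (Theorem \ref{D.mainthm}) hence is never nilpotent, $f_x(g_xc)$ is a reduction-free product all of whose monomials are strictly longer than those of $g_xc$, and Lemma \ref{D.mainlemma} controls which monomials can contribute a given $\prec$-maximal monomial to a product; iterating the displayed identity should then manufacture $x$-ending monomials of unbounded length inside the finitely supported element $g_xc$, which is absurd. The genuine difficulty — and where I expect the real work to lie — is the cross term $f_a(g_xc)$: multiplying an $a$-ending monomial by an $x$-ending one can collapse sharply under $a^2x \to a$ (for example $a \cdot axax = a$), so the length bookkeeping is delicate; I would tame it by induction on $\ell_{\max}(f) + \ell_{\max}(g)$ (the largest lengths of monomials occurring), peeling off a shorter would-be counterexample from the displayed identity and exploiting the annihilator description $Ra = l_R(1+ax)$ to keep $f_a(g_xc)$ under control.
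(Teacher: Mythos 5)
Your reformulation is correct and rather elegant: with $c = 1+ax$ one indeed has $ac = a + a^2x = 0$ over $\mathbb{F}_2$, Corollary \ref{D.lann} gives $l_R(c) = Ra$, so the hypothesis becomes $(1+f)(1+g)c = c$ and the conclusion becomes $(1+f)c = c = (1+g)c$; and since $1+f$ has $1$ in its support, Theorem \ref{D.mainthm} gives $r_R(1+f)=0$, which correctly makes the two halves of the conclusion equivalent. (The paper gets the same reduction more cheaply: if $g \in Ra$ then $fg \in Ra$, hence $f = (f+g+fg)+g+fg \in Ra$.) Everything up to and including the identity $g_xc = f_xc + f_x(g_xc) + f_a(g_xc)$ checks out.

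The problem is that the proof stops exactly where the lemma becomes nontrivial. The contradiction is never derived: ``iterating the displayed identity should then manufacture $x$-ending monomials of unbounded length \dots which is absurd'' is a hope, not an argument, and you yourself flag the cross term $f_a(g_xc)$ as ``where I expect the real work to lie'' without doing that work. The danger is genuine: a product $m_1m_2$ with $m_1$ ending in $a$ and $m_2$ ending in $x$ can collapse under $a^2x\to a$ to something much shorter than either factor (e.g.\ $a\cdot ax = a$), so no naive length lower bound on $f_a(g_xc)$ is available, and the proposed induction on $\ell_{\max}(f)+\ell_{\max}(g)$ is never set up (what is the shorter counterexample, and why does it still satisfy the hypotheses?). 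What is missing is a mechanism that exhibits one specific monomial of $Rx$ surviving all cancellation in $f+g+fg$. The paper supplies exactly this: it takes $M$ the $\prec$-maximal monomial of $\supp(g)$ and $m$ a largest-length monomial of $\supp(f)$ lying in $Rx$, and reruns the surviving-monomial analysis of Theorem \ref{D.mainthm} to show that $mM$ or $mM(l)$ occurs with nonzero coefficient in $fg$, lies in $Rx$, and is too long, respectively too large under $\prec$, to be cancelled by anything in $\supp(f)$ or $\supp(g)$ --- contradicting $f+g+fg\in Ra$. Some argument of this kind (or a fully executed descent) is required; as written, the proposal has a genuine gap at its climax.
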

	\begin{proof}
		It is enough to show that $g \in Ra$. Suppose it is not the case and $g = g_1x + g_2a$ for some $g_1,\, g_2 \in R$ such that $g_1 \notin Ra^2$. Now $f + g + fg \in Ra$ implies that $$f + g_1x + fg_1x \in Ra.$$ If $f \in Ra$, then we have $g_1x + fg_1x \in Ra$. If $f=\sum_{i \in I} m_i$ then there exists a subset $J \subseteq I$ such that for $f'= \sum_{j \in J}m_j$, $g_1x + f'g_1x=0$. This implies $(1+f')g_1x=0$. As $1 \in \rm{supp}(1+f')$, thus by Theorem \ref{D.mainthm}, $g_1x=0$. Since for every $k \in \mathbb{N}$, $a^kx \neq 0$, so by Corollary \ref{D.lann}, $g_1 = 0$, a contradiction. 
		
		Lastly, assume that $  f\notin Ra$. Let $M$ be the maximal monomial in support of $g$ and $m$ be a largest length monomial in support of $f$ that ends with $x$. Same as in proof of Theorem \ref{D.mainthm}, in the product $fg$ either $mM(l)$ or $mM$ survives. If $mM(l)$ survives, then $mM(l) \notin \rm{supp}(f)$ since $l($mM(l)$)>l(m)$ and $mM(l) \in Rx$. Also, $mM(l) \notin \rm{supp}(g)$, since $ M \prec mM(l)$. This implies $mM(l) \in \rm{supp}(f+g+fg) \cap Rx$, a contradiction as $f + g + fg \in Ra$ . Similarly, if $mM$ survives then $mM \in \rm{supp}(f+g+fg) \cap Rx$, again a contradiction.
	\end{proof}
	\begin{remark}
		Using Lemma \ref{D.aa}, we give a self-contained proof of the last part of Theorem \ref{D.symm}.
	\end{remark}
	
	Let $f = gf^2$ for some $f,\,g\in R$. Suppose $f \neq 0$. If $l_R(f) = 0$, then $1=gf$ and so $f$ is invertible. Assume that $l_R(f) \neq 0$. Then by Corollary \ref{D.lann}, $l_R(f) = Ra^k$, where $k \in \mathbb{N}$ is the smallest positive integer such that $a^kf = 0$. 
	
	As $gf - 1 \in  l_R(f) = Ra^k$, so $1$ is not in the support of $gf-1$. This implies that $1$ is in the support of both $f$ and $g$. Let $f = 1+ f_1$ and $g = 1 + g_1$. As $gf-1 = g_1  + f_1 + g_1f_1 \in Ra^k$, so by Lemma \ref{D.aa} both $f_1$ and $g_1$ are in $Ra$. Let $f_1 = f_2a$ and $g_1 = g_2a$. Then $f = 1+ f_2a$ and $g = 1+g_2a$. 
	
	Recall that $l_R(f) \neq 0$. If $k = 1$, then $0 = af = a(1 + f_2a)$ implies that $0 = (1+af_2)a$. As $l_R(a) = 0$, this implies that $af_2 = 1$ and so $a$ is invertible. This is a contradiction. 
	
	Now assume that $k > 1$ and that there don't exist elements $h$ and $t$ in $R$ such that $h = th^2$ with $a^{k-1}h = 0$. Then $0 = a^kf = a^k(1 + f_2a) = a^{k-1}(1 + af_2)a$. As $l_R(a) = 0$ we have $a^{k-1}(1 + af_2) = 0$. Now $f = gf^2$ implies that $$1+f_2a = (1+g_2a)(1+f_2af_2a) \Rightarrow
	a(1+f_2a) = a(1+g_2a)(1+f_2af_2a)$$  $$\Rightarrow (1+af_2)a = (1+ag_2)(1+af_2af_2)a.$$ As $l_R(a) = 0$, we have
	$$ 1+af_2 = (1+ag_2)(1+af_2af_2).$$  As $a^{k-1}(1 + af_2) = 0$, letting $h = 1 + af_2$ and $t = 1+ag_2$ we have that $h = th^2$ with $a^{k-1}h = 0$, a contradiction. \qed
	\begin{lemma}\label{D.unitprop}
		\textup{(1)} If $f,\,g \in R$ are such that $fg = 1$, then both $f$ and $g$ have $1$ in their supports.\\[2mm]
		\textup{(2)} No non-identity element of $\mathbb{F}_2[a]$ is invertible in $R$. \\[2mm]
		\textup{(3)} If $1+f$ is a unit in $R$, then $f \in Ra$.
	\end{lemma}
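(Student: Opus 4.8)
The plan is to prove the three statements in turn, using two auxiliary ring homomorphisms out of $R$ together with what has already been established.

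For (i), I would use the $\mathbb{F}_2$-algebra homomorphism $\epsilon\colon R\to\mathbb{F}_2$ determined by $\epsilon(a)=\epsilon(x)=0$; it respects the defining relation since $\epsilon(a^2x)=\epsilon(a)^2\epsilon(x)=0=\epsilon(a)$. Evaluating $\epsilon$ on the reduced-monomial basis $\mathcal B$ shows that $\epsilon$ records exactly the coefficient of the monomial $1$, so $\epsilon(r)=1$ if and only if $1\in\supp(r)$. Then $fg=1$ forces $\epsilon(f)\epsilon(g)=1$ in the field $\mathbb{F}_2$, whence $\epsilon(f)=\epsilon(g)=1$; that is, both $f$ and $g$ have $1$ in their support.

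For (ii), I would instead use the $\mathbb{F}_2$-algebra homomorphism $\phi\colon R\to\mathbb{F}_2[a,a^{-1}]$ into the Laurent polynomial ring, determined by $\phi(a)=a$ and $\phi(x)=a^{-1}$; it is well defined because $\phi(a^2x)=a^2a^{-1}=a=\phi(a)$, and on the subring $\mathbb{F}_2[a]\subseteq R$ it is the obvious inclusion $\mathbb{F}_2[a]\hookrightarrow\mathbb{F}_2[a,a^{-1}]$. So if some $f\in\mathbb{F}_2[a]$ with $f\ne1$ were a unit of $R$, then $\phi(f)=f$ would be a unit of $\mathbb{F}_2[a,a^{-1}]$; since the units of that Laurent ring are exactly the monomials $a^{n}$, $n\in\Z$, and $f$ involves no negative powers of $a$, this would give $f=a^n$ with $n\ge1$. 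But in $R$ one has $a^n(1-ax)=a^n-a^{n+1}x=a^n-a^{n-1}(a^2x)=a^n-a^n=0$ while $1-ax\ne0$ (its support is $\{1,ax\}$), so $a^n$ is a left zero-divisor of $R$ and cannot be a unit --- a contradiction.

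For (iii), the plan is to reduce to Lemma~\ref{D.aa} via part (i). Let $g$ be the inverse of $1+f$. By (i), both $1+f$ and $g$ have $1$ in their support, so comparing coefficients of the monomial $1$ (we work over $\mathbb{F}_2$) forces $1\notin\supp(f)$ and, with $h:=g-1$, also $1\notin\supp(h)$. Expanding $(1+f)(1+h)=(1+f)g=1$ yields $f+h+fh=0\in Ra$, so Lemma~\ref{D.aa} applies to the pair $f,h$ and gives $f\in Ra$ (and incidentally $h\in Ra$). I do not expect a real obstacle here; the only points needing a word of justification are the well-definedness of $\epsilon$ and $\phi$ and the identification of $\epsilon$ with the coefficient-of-$1$ functional. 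If anything, the one mildly non-mechanical step is in (ii) --- choosing to map $R$ into $\mathbb{F}_2[a,a^{-1}]$ rather than to argue internally, after which the nonzero right annihilator $1-ax$ of $a^n$ finishes the job.
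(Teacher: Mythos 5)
Your proposal is correct. Parts (i) and (iii) are essentially the paper's argument: (i) is the paper's one-line observation that a product of monomials equals $1$ only if both do, which you merely repackage as the augmentation map $\epsilon\colon R\to\F_2$, $a,x\mapsto 0$ (well defined since $\epsilon(a-a^2x)=0$, and equal to the coefficient-of-$1$ functional on the basis $\mathcal B$); and (iii) is exactly the paper's reduction to Lemma~\ref{D.aa}, with your use of (i) to justify $1\notin\supp(f)$ and $1\notin\supp(h)$ making explicit a step the paper leaves tacit. Part (ii) is where you genuinely diverge: the paper argues internally, writing the inverse as $1+g$, invoking Lemma~\ref{D.aa} to get $g\in Ra$, and then running a maximal-monomial argument with the order $\prec$ (the largest monomial of $g$ outside $\F_2[a]$ survives in $g+f$ but cannot occur in $gf$), whereas you map $R$ onto the Laurent ring $\F_2[a,a^{-1}]$ via $a\mapsto a$, $x\mapsto a^{-1}$ (well defined since $a^2a^{-1}=a$), use that its unit group is $\{a^n: n\in\Z\}$ to force $f=a^n$ with $n\ge 1$, and then kill this with the relation $a^n(1-ax)=a^n-a^n=0$, $1-ax\ne 0$, so $a^n$ is a left zero-divisor (the same fact the paper records in Corollary~\ref{D.zerodivisors}). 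Your route for (ii) is shorter and self-contained --- it needs neither (i), nor Lemma~\ref{D.aa}, nor the order $\prec$ --- while the paper's proof stays inside $R$ and exercises the combinatorial machinery on reduced monomials that the rest of Section~3 depends on; both are valid.
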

	\begin{proof}
		\textup{(1)} This is clear because the product of two monomials is $1$ iff both equal $1$. \\[2mm]
		\textup{(2)} Suppose to the contrary $1 + f \in \mathbb{F}_2[a]$ is invertible in $R$ with inverse $1+g$, where $f\neq 0$ and $g$ don't have $1$ in their supports.
		Also $g \in Ra$ by Lemma \ref{D.aa}. Clearly, $g \notin \mathbb{F}_2[a]$ because otherwise $1+f \in \mathbb{F}_2[a]$ is invertible in $\mathbb{F}_2[a]$. This is not possible as $1$ is the only invertible element in the ring $\mathbb{F}_2[a]$. Now $(1+g)(1+f)=1$ implies $g+f = gf$. Let $M$ be the largest monomial in the support of $g$ that is not in $\mathbb{F}_2[a]$. Then $M$ is present in the support of $g+f$. We show that $M$ is not present in the support of $gf$. Suppose $M = ma^k$, where $m$ is a monomial in the support of $g$ and $a^k$, $k \in \mathbb{N}$, is a monomial in the support of $f$. Clearly $m \neq M$. So by the maximality of $M$, we have $m \prec M$. Thus $M = ma^k \prec m \prec M$, which is a contradiction. \\[2mm]
		\textup{(3)} Suppose $1+g$ is inverse of $1+f$. Then $f + g + fg = 0$ and as $f$ and $g$ don't have $1$ in their supports, so $f\in Ra$ by Lemma \ref{D.aa}. 
	\end{proof}
	Now, we prove another main result of this section.
	\begin{thm}\label{D.UU}
		Every unit in $R$ is a unipotent. That is, $R$ is a $UU$ ring.
	\end{thm}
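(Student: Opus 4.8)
Here is how I would attack Theorem~\ref{D.UU}.

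\medskip
\noindent\textbf{Proof proposal.}
The plan is to prove that for every unit $u\in R$ the element $f:=u-1$ is nilpotent. By Lemma~\ref{D.unitprop}(i) the unit $u$ has $1$ in its support, so $f$ does not, and by Lemma~\ref{D.unitprop}(iii) we have $f\in Ra$; write $f=f_1a$ (if $f=0$ there is nothing to prove). Since $u=1+f_1a$ is a unit and $1+st$ is a unit precisely when $1+ts$ is, the element $1+af_1$ is a unit, so Lemma~\ref{D.unitprop}(iii) gives $af_1\in Ra$; writing $af_1=f_2a$ and iterating, I obtain $f_1,f_2,f_3,\dots\in R$ with $af_j=f_{j+1}a$ for every $j$, each $f_{j+1}$ being uniquely determined because $l_R(a^p)=0$ for every $p$ (else $a^qa^p=a^{p+q}=0$ for some $q$ by Corollary~\ref{D.lann}, contradicting that $a$ is not nilpotent, since $a^N=0$ forces $a=a^Nx^{N-1}=0$). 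Iterating the relation gives $a^jf_1=f_{j+1}a^j$ and $f^{j+1}=f_1f_2\cdots f_{j+1}\,a^{j+1}$, so it suffices to show $f_k=0$ for some $k$ --- equivalently, by Theorem~\ref{D.nr}, that $f\in N^r_R(a)=N(R)$.

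Two elementary facts about multiplication by powers of $a$, both obtained by bookkeeping with the rewriting rule $a^kx=a^{k-1}$ ($k\ge 2$, i.e.\ the relation $a=a^2x$), drive the argument. First, the map $r\mapsto ra^n$ is injective ($ra^n=0$ forces $r\in l_R(a^n)=0$) and carries each reduced monomial $m$ to a reduced monomial of length $l(m)+n$; hence every monomial in the support of a nonzero element of $Ra^n$ has length at least $n$. Second, for a fixed reduced monomial $m$ and all sufficiently large $j$, pushing the prefix $a^j$ rightward through $m$ absorbs every $x$ it meets, so $a^jm=a^{j-\delta(m)}$, where $\delta(m)$ is the number of $x$'s minus the number of $a$'s in $m$; one checks that once $j$ exceeds the number of $x$'s in $m$ the running $a$-exponent stays $\ge 2$ until the last $x$ is cleared, and counting absorbed letters yields the exponent $j-\delta(m)$.

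Fixing a large $j$ and expanding $f_1$ over its finite support, I group the collapsed terms by exponent and write $a^jf_1=\phi\,a^j+E_j$, where $\phi:=\sum_{d\le 0}\epsilon_d\,a^{-d}\in\mathbb{F}_2[a]$ does not depend on $j$ (with $\epsilon_d\in\mathbb{F}_2$ the parity of the number of monomials $m$ in the support of $f_1$ having $\delta(m)=d$) and $E_j:=\sum_{d\ge 1}\epsilon_d\,a^{j-d}$ is a combination of the distinct powers $a^{j-1},a^{j-2},\dots$, each of length smaller than $j$. From $a^jf_1=f_{j+1}a^j$ we get $E_j=(f_{j+1}-\phi)a^j\in Ra^j$; but a nonzero element of $Ra^j$ has no monomial of length $<j$, so $E_j=0$, and then $f_{j+1}=\phi$ (again because $l_R(a^j)=0$). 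Consequently $1+af_j=1+f_{j+1}a=1+\phi a$ is a unit of $R$ lying inside $\mathbb{F}_2[a]$, so it equals $1$ by Lemma~\ref{D.unitprop}(ii); thus $\phi a=0$, whence $\phi=0$ and $f_{j+1}=0$. Therefore $f^{j+1}=0$, $u=1+f$ is unipotent, and $R$ is a $UU$ ring.

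The step I expect to be the real work is the second reduction fact --- that $a^jm$ collapses to the single power $a^{j-\delta(m)}$ once $j$ is large --- together with pinning down how large $j$ must be and verifying that the running $a$-exponent never falls to $1$ while unprocessed $x$'s remain. After that the argument is routine: the length estimate for $Ra^n$, the absence of cancellation among distinct powers of $a$, and the three clauses of Lemma~\ref{D.unitprop}.
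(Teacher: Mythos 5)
Your proof is correct and follows the same route as the paper's: write $f=f_1a$, conjugate repeatedly via Lemma~\ref{D.unitprop}(iii) to get $af_j=f_{j+1}a$, show the sequence eventually lands in $\mathbb{F}_2[a]$, and finish with Lemma~\ref{D.unitprop}(ii). Your explicit collapse computation $a^jm=a^{j-\delta(m)}$ (together with the length estimate for elements of $Ra^j$) is simply a rigorous filling-in of the step the paper disposes of with ``proceeding on these lines $af_k\in\mathbb{F}_2[a]$ \dots\ as in the proof of Theorem~\ref{D.nr}.''
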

	\begin{proof}
		The proof follows on similar lines as that of Theorem \ref{D.nr}. Let $1+f \in \rm{U}$$(R)$.  By Lemma \ref{D.unitprop}(3), $f= f_1a$ for some $f_1\in R$. As $1+ f = 1+ f_1a$ is a unit, so is $1+af_1$. Again by \ref{D.unitprop}(3), $af_1 = f_2a$ for some $f_2 \in R$. Similarly, $1+af_2$ is a unit. Proceeding on these lines $af_k \in \mathbb{F}_2[a]$, for some $k\in \mathbb{N}$, as in the proof of Theorem \ref{D.nr}. As $1+af_k$ is a unit, so $af_k = 0$ by Lemma \ref{D.unitprop}(2). This implies that $f \in N_R(a)$ and so $f$ is nilpotent. Thus $R$ is a UU ring.
	\end{proof} 
	
	Part \textup{(1)} of the following result was noted without proof in \cite{Dit} also.
	\begin{cor}
		\textup{(1)} $\rm{J}$$(R) = 0$.\\[2mm]
		\textup{(2)} $\rm{N}$$(R) + \rm{U}$$(R) \subseteq \rm{U}$$(R).$\\[2mm]
		\textup{(3)} For any $u \in \rm{U}$$(R)$, we have $u \rm{N}$$(R) \subseteq \rm{N}$$(R)$ and $ \rm{N}$$(R)u \subseteq \rm{N}$$(R)$.
	\end{cor}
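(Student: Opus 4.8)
The plan is to obtain (ii) and (iii) as formal consequences of two facts already in hand — that $R$ is a $UU$ ring (Theorem \ref{D.UU}) and that $\mathrm{N}(R)=\mathrm{N}^r_R(a)$ is a subring (Theorem \ref{D.nr}) — and to put the real work into (i). The $UU$ property gives $\mathrm{U}(R)\subseteq 1+\mathrm{N}(R)$, and since $1+n$ is a unit for every nilpotent $n$, in fact $\mathrm{U}(R)=1+\mathrm{N}(R)$; being a subring, $\mathrm{N}(R)$ is closed under addition and multiplication. Then (ii) is just $\mathrm{N}(R)+\mathrm{U}(R)=\mathrm{N}(R)+\bigl(1+\mathrm{N}(R)\bigr)=1+\bigl(\mathrm{N}(R)+\mathrm{N}(R)\bigr)=1+\mathrm{N}(R)=\mathrm{U}(R)$, and for (iii) I would write a unit $u$ as $u=1+m$ with $m\in\mathrm{N}(R)$ and compute $u\,\mathrm{N}(R)=\mathrm{N}(R)+m\,\mathrm{N}(R)\subseteq\mathrm{N}(R)+\mathrm{N}(R)=\mathrm{N}(R)$, and symmetrically $\mathrm{N}(R)u\subseteq\mathrm{N}(R)$.

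For (i), the first step is to note that $\mathrm{J}(R)$ is a \emph{nil} ideal: for $j\in\mathrm{J}(R)$ the unit $1+j$ is unipotent by Theorem \ref{D.UU}, so $j=(1+j)-1$ is nilpotent, whence $\mathrm{J}(R)\subseteq\mathrm{N}(R)$. Assume for contradiction that $\mathrm{J}(R)\neq 0$ and pick $0\neq j\in\mathrm{J}(R)$. As a nonzero nilpotent, $j$ satisfies $r_R(j)\neq 0$ (it is killed on the right by $j^{k-1}$, where $k$ is its nilpotence index), so $j\in Ra$ by Corollary \ref{D.rann}; equivalently every monomial in $\operatorname{supp}(j)$ ends in $a$. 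The key step is to right-multiply $j$ by a high power of $x$: choosing $D$ larger than the trailing exponent of $a$ in every monomial of $\operatorname{supp}(j)$, the relation $a^2x=a$ yields $a^ix^D=ax^{D-i+1}$ whenever $i\le D$, so a monomial $M=M'a^i$ of $j$ is carried to the reduced monomial $M'ax^{D-i+1}$, which ends in $x$. Thus every monomial of $jx^D$ ends in $x$, i.e.\ $\operatorname{supp}(jx^D)\subseteq Rx$, so $jx^D\notin Ra$; and $jx^D\neq 0$, since $jx^D=0$ would force $jx^{D-1}\in l_R(x)=0$ and, descending, $j=0$ (here $l_R(x)=0$ by Corollary \ref{D.lann}, because $a^kx\neq 0$ for every $k$). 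Hence $jx^D$ is a nonzero element lying outside $Ra$; since every nonzero nilpotent of $R$ lies in $Ra$, the element $jx^D$ is not nilpotent. But $jx^D\in jR\subseteq\mathrm{J}(R)$, which is nil — a contradiction. Therefore $\mathrm{J}(R)=0$.

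I expect the only non-routine point to be the trick in (i): that right-multiplying a nonzero nilpotent by a sufficiently high power of $x$ escapes $Ra$. This rests on the reduction $a^ix^D\to ax^{D-i+1}$ and on the annihilator facts $l_R(x)=0$ (and, if preferred, $l_R(x^D)=0$), all of which follow at once from $a^kx\neq 0$ and Corollary \ref{D.lann}; the remaining manipulations with reduced monomials are bookkeeping. The same argument in fact shows that the two-sided ideal generated by any nonzero nilpotent element of $R$ fails to be nil, so $R$ has no nonzero nil ideals — an alternative way to package the conclusion $\mathrm{J}(R)=0$. Parts (ii) and (iii) require nothing beyond the $UU$ property and the fact that $\mathrm{N}(R)$ is a subring.
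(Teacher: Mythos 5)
Your proposal is correct and follows essentially the same route as the paper: parts (ii) and (iii) are deduced exactly as in the paper from $\mathrm{N}(R)$ being a subring (Theorem \ref{D.nr}) together with the $UU$ property (Theorem \ref{D.UU}), and for (i) both arguments hinge on right-multiplying a nonzero element by a high power of $x$ to produce a nonzero element outside $Ra$, hence non-nilpotent, which contradicts membership in $\mathrm{J}(R)$ via the $UU$ property. Your write-up merely makes explicit the bookkeeping (the reduction $a^ix^D = ax^{D-i+1}$ and $l_R(x)=0$) that the paper leaves implicit.
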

	\begin{proof}
		(1) For any nonzero $f\in R$ there exists $k\in \mathbb{N}$ such that $fx^k$ has a monomial in its support that is in $Rx$. So by Corollary \ref{D.rann}, $fx^k$ is not nilpotent and by Theorem \ref{D.UU}, $1+fx^k$ is not a unit. So $\rm{J}$$(R) = 0$.\\[2mm]
		(2) and (3) follow from the facts that $\rm{N}$$(R)$ is a subring (Theorem \ref{D.nr}) and units in $R$ are unipotents (Theorem \ref{D.UU}).
	\end{proof}
	Camillo and Nielsen in \cite{CVP} defined a ring $S$ to be right linearly McCoy if for any two linear polynomials $f,\,g\in S[x]$ with $fg = 0$, there exists a nonzero $s\in S$ such that $fs=0$. A left linearly McCoy rings can be defined similarly. As a one-sided linearly McCoy ring is Dedekind-finite \cite[Theorem 5.2]{CVP},  one may wonder if it is right Dischinger. This is not the case as we will prove that the ring $R$ is left and right linearly McCoy. We need the following result proved in \cite[Proposition 3.3]{AD}.
	\begin{lemma}
		A ring $S$ is right linearly McCoy iff for any elements $s_1,\,s_2\in S$, $$r_S(s_1) \cap r_S(s_2) = 0 \Rightarrow s_2r_S(s_1) \cap s_1r_S(s_2) = 0.$$
	\end{lemma}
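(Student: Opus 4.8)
The plan is to unwind right linear McCoyness into the language of annihilators by the elementary bookkeeping of the coefficients of a product of two linear polynomials, and then read off both implications. Writing $f = f_0 + f_1 x$ and $g = g_0 + g_1 x$, the relation $fg = 0$ is equivalent to the three conditions $f_0 g_0 = 0$, $f_1 g_1 = 0$, and $f_0 g_1 + f_1 g_0 = 0$; moreover, for a constant polynomial $c$, the relation $fc = 0$ is equivalent to $c \in r_S(f_0) \cap r_S(f_1)$. With these two dictionary entries in hand the rest is short.

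For the forward implication, suppose $S$ is right linearly McCoy and $r_S(s_1) \cap r_S(s_2) = 0$, and suppose toward a contradiction that some nonzero $w$ lies in $s_2 r_S(s_1) \cap s_1 r_S(s_2)$, say $w = s_2 u = s_1 v$ with $u \in r_S(s_1)$ and $v \in r_S(s_2)$. The idea is to package $u$ and $v$ into a single polynomial: set $f = s_1 + s_2 x$ and $g = u - v x$. Using $s_1 u = 0$, $s_2 v = 0$ and $s_2 u = s_1 v$ one checks directly that $fg = 0$; furthermore $g \neq 0$ (otherwise $w = s_2 u = 0$) and $f \neq 0$ (otherwise $s_1 = s_2 = 0$, whence $r_S(s_1) \cap r_S(s_2) = S \neq 0$). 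Right linear McCoyness then yields a nonzero $c$ with $fc = 0$, i.e. $c \in r_S(s_1) \cap r_S(s_2) = 0$, a contradiction; hence $s_2 r_S(s_1) \cap s_1 r_S(s_2) = 0$.

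For the converse, assume the annihilator condition holds and take nonzero linear polynomials $f = f_0 + f_1 x$ and $g = g_0 + g_1 x$ with $fg = 0$; the goal is to produce a nonzero $c$ with $fc = 0$, i.e. to show $r_S(f_0) \cap r_S(f_1) \neq 0$. If instead $r_S(f_0) \cap r_S(f_1) = 0$, the hypothesis gives $f_1 r_S(f_0) \cap f_0 r_S(f_1) = 0$; but $f_1 g_0 \in f_1 r_S(f_0)$, $f_0 g_1 \in f_0 r_S(f_1)$, and $f_1 g_0 = -f_0 g_1$ by the middle-coefficient condition, so $f_1 g_0 = f_0 g_1 = 0$. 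Together with $f_0 g_0 = 0$ and $f_1 g_1 = 0$ (already built into $fg = 0$) this forces $g_0, g_1 \in r_S(f_0) \cap r_S(f_1) = 0$, so $g = 0$, contradicting $g \neq 0$. Hence $r_S(f_0) \cap r_S(f_1) \neq 0$, any nonzero element of it serves as $c$, and $S$ is right linearly McCoy.

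I do not anticipate a genuine obstacle. The only points needing care are: choosing the sign in the auxiliary polynomial $g = u - vx$ so that its middle coefficient $s_1(-v)+s_2u$ vanishes; verifying that this $g$, and the polynomial $f$, are actually nonzero so that the McCoy hypothesis applies; and, in the converse, noticing that the two "extreme" relations $f_0 g_0 = 0$ and $f_1 g_1 = 0$ coming for free from $fg = 0$ are exactly what is needed, alongside $f_1 g_0 = f_0 g_1 = 0$, to place both $g_0$ and $g_1$ into the trivial intersection.
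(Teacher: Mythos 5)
Your argument is correct. Note that the paper itself does not prove this lemma at all: it is quoted from \cite[Proposition 3.3]{AD}, so there is no internal proof to compare against. Your coefficient-matching argument is the natural self-contained one and both directions check out: in the forward direction the auxiliary polynomials $f=s_1+s_2x$ and $g=u-vx$ satisfy $fg=(s_1u)+(s_2u-s_1v)x-(s_2v)x^2=0$, and your verifications that $f\neq 0$ and $g\neq 0$ are exactly what is needed to invoke the McCoy hypothesis (which, as in Camillo--Nielsen, is imposed only on nonzero linear polynomials); in the converse, the identities $f_0g_0=0$, $f_1g_1=0$, $f_0g_1+f_1g_0=0$ together with $f_1r_S(f_0)\cap f_0r_S(f_1)=0$ do force $g_0,g_1\in r_S(f_0)\cap r_S(f_1)$, giving the contradiction $g=0$. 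The only cosmetic remark is that the degenerate case $s_1=s_2=0$ is even more immediate than your phrasing suggests, since then $r_S(s_1)\cap r_S(s_2)=S=0$ forces $w=0$ outright; this does not affect correctness.
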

	\begin{thm}
		$R$ is left and right linearly McCoy.
	\end{thm}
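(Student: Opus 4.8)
The plan is to verify the annihilator criterion of \cite[Proposition 3.3]{AD} recorded in the preceding lemma, together with its left--right dual. Thus I would show: (i) for all $s_1,s_2\in R$, the condition $r_R(s_1)\cap r_R(s_2)=0$ forces $s_2 r_R(s_1)\cap s_1 r_R(s_2)=0$ (right linearly McCoy); and (ii) applying the same criterion in $R^{\mathrm{op}}$, for all $s_1,s_2\in R$, the condition $l_R(s_1)\cap l_R(s_2)=0$ forces $l_R(s_1)s_2\cap l_R(s_2)s_1=0$ (left linearly McCoy). The engine behind both implications is the rigidity of one-sided annihilators in $R$ established in Corollaries \ref{D.rann} and \ref{D.lann}: every nonzero right annihilator has the form $r_R(a^{k})$ and every nonzero left annihilator has the form $Ra^{n}$, and any two members of either family already meet nontrivially. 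Hence the hypotheses in (i) and (ii) can hold only when one of the two annihilators is zero, in which case the asserted conclusion is immediate.

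For (i), let $s_1,s_2\in R$ with $r_R(s_1)\cap r_R(s_2)=0$. We may assume $s_1\ne 0\ne s_2$, since otherwise one of the products $s_2 r_R(s_1)$, $s_1 r_R(s_2)$ is $\{0\}$ and there is nothing to prove. Suppose for contradiction that both $r_R(s_1)$ and $r_R(s_2)$ are nonzero. Then Corollary \ref{D.rann} gives integers $k,m\ge 1$ with $r_R(s_1)=r_R(a^{k})$ and $r_R(s_2)=r_R(a^{m})$. Since $a^{j}(1-ax)=a^{j-1}(a-a^{2}x)=0$ for every $j\ge 1$ and $1-ax\ne 0$ (the reduced monomials $1$ and $ax$ are distinct elements of the basis $\mathcal B$), we obtain $0\ne 1-ax\in r_R(a^{k})\cap r_R(a^{m})=r_R(s_1)\cap r_R(s_2)$, a contradiction. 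Hence $r_R(s_1)=0$ or $r_R(s_2)=0$, and in either case $s_2 r_R(s_1)\cap s_1 r_R(s_2)=0$, so $R$ is right linearly McCoy.

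For (ii), let $s_1,s_2\in R$ with $l_R(s_1)\cap l_R(s_2)=0$; again we may assume $s_1\ne 0\ne s_2$. If both $l_R(s_1)$ and $l_R(s_2)$ were nonzero, then Corollary \ref{D.lann} would give integers $n,m\ge 1$ with $l_R(s_1)=Ra^{n}$ and $l_R(s_2)=Ra^{m}$; since $Ra\supseteq Ra^{2}\supseteq\cdots$, the intersection $Ra^{n}\cap Ra^{m}=Ra^{\max\{n,m\}}$ contains the nonzero basis element $a^{\max\{n,m\}}$, contradicting $l_R(s_1)\cap l_R(s_2)=0$. So one of $l_R(s_1),l_R(s_2)$ vanishes, whence $l_R(s_1)s_2\cap l_R(s_2)s_1=0$, and $R$ is left linearly McCoy.

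I do not anticipate any substantial obstacle here: Corollaries \ref{D.rann} and \ref{D.lann} do all the heavy lifting, and what remains is essentially bookkeeping. The only points calling for mild care are the degenerate cases $s_i=0$ (where the corresponding one-sided annihilator is all of $R$, not one of the listed shapes) and the observation that the chains $r_R(a)\subseteq r_R(a^{2})\subseteq\cdots$ and $Ra\supseteq Ra^{2}\supseteq\cdots$ consist of nonzero sets — which is precisely why the hypothesis of \cite[Proposition 3.3]{AD} can never be met by a genuinely separated pair in $R$.
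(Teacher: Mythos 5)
Your proof is correct and follows essentially the same route as the paper: both arguments use Corollaries \ref{D.rann} and \ref{D.lann} to show that any two nonzero right (resp.\ left) annihilators in $R$ already intersect nontrivially --- via the common element $1-ax$ (resp.\ a power of $a$) --- so the hypothesis of the McCoy criterion forces one annihilator to vanish and the conclusion is immediate. Your treatment is, if anything, slightly more careful than the paper's in handling the degenerate case $s_i=0$.
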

	\begin{proof}
		By Corollary \ref{D.rann}, for any $f \in R$ either $f \in Ra$ or $r_R(f) = 0$. Also if both $f,\,g \in Ra$, then $(1-xa)R \in r_R(f) \cap r_R(g)$ implying that $r_R(f) \cap r_R(g) \neq 0$. So $r_R(f) \cap r_R(g)=0$ precisely when either $r_R(f) = 0$ or $r_R(g) = 0$ and in this case it is clear that $gr_R(f) \cap fr_R(g) = 0$ implying that $R$ is right linearly McCoy. 
		
		Also by Corollary \ref{D.lann}, for any $f \in R$ either $l_R(f) = Ra^k$ for some positive integer $k$ or $l_R(f)=0$. So $l_R(f) \cap l_R(g) = 0$ precisely when either $l_R(f) = 0$ or $l_R(g) = 0$. So, as in the above case, $R$ is left linearly McCoy.
	\end{proof}
	\begin{cor}\label{D.lmccoy}
		A left and right linearly McCoy ring may not be right Dischinger.
	\end{cor}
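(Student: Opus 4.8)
The plan is to read off Corollary~\ref{D.lmccoy} as an immediate consequence of the theorem just proved, together with the failure of the right Dischinger property for the ring $R$ of (3.1). The preceding theorem shows that $R$ is simultaneously left and right linearly McCoy, so all that remains is to recall that $R$ is \emph{not} right Dischinger — which was the whole point of constructing and analyzing $R$ in the first place. This follows quickly from what has already been established: the defining relation $a = a^2x$ gives $aR = a^2R$, so if $R$ were right Dischinger we would obtain $Ra = Ra^2$, i.e. $a$ would be strongly regular. By Theorem~\ref{D.symm} (equivalently, because by \cite[Theorem~2.3]{Dit} the only idempotents of $R$ are $0$ and $1$), this forces $a = 0$ or $a \in \mathrm{U}(R)$. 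But $a \neq 0$ in $\mathbb{F}_2\langle a,x : a = a^2x\rangle$, and $a$ is a non-unit by Lemma~\ref{D.unitprop}(ii); contradiction. Hence $R$ is a left and right linearly McCoy ring that is not right Dischinger, which is exactly the assertion of the corollary.

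Rather than an obstacle, the point worth emphasizing is that $R$ genuinely lies in the Dedekind-finite regime, which is the setting of Hartwig and Luh's question: a one-sided linearly McCoy ring is automatically Dedekind-finite by \cite[Theorem~5.2]{CVP}, so no extra argument is needed to place the counterexample in the right class. Consequently the proof is a single line — "by the previous theorem $R$ is left and right linearly McCoy, and $R$ is not right Dischinger by Theorem~\ref{D.symm}" — possibly with a parenthetical reminder of why $R$ fails the right Dischinger condition, as spelled out above.

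More broadly, all the negative corollaries of Section~3 (an NR ring, a ring with ACC on left annihilators, a UU ring, a left and right linearly McCoy ring need not be right Dischinger) are obtained from the \emph{same} witness $R$, with the real content residing in the structural lemmas about $R$ — the right annihilator dichotomy (Corollary~\ref{D.rann}), the left annihilator description (Corollary~\ref{D.lann}), the identification of $\mathrm{N}(R)$ (Theorem~\ref{D.nr}), the UU property (Theorem~\ref{D.UU}), and the linearly McCoy computation via the annihilator criterion. Corollary~\ref{D.lmccoy} is then purely a matter of combining the linearly McCoy theorem with Theorem~\ref{D.symm}.
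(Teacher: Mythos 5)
Your proposal is correct and matches the paper's (implicit) argument exactly: the corollary is stated without proof as an immediate consequence of the preceding theorem, the witness being the ring $R$ of (3.1), which is left and right linearly McCoy but not right Dischinger since $a=a^2x$ gives $aR=a^2R$ while $Ra\neq Ra^2$ (as $a$ is neither $0$ nor a unit). Your added justification via Theorem~\ref{D.symm} and Lemma~\ref{D.unitprop}(ii) is a correct filling-in of that last point.
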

	Recall that a ring $S$ is said to be right McCoy if for any two polynomials $f,\,g\in S[x]$ with $fg = 0$, there exists a nonzero $s\in S$ such that $fs=0$. A left McCoy rings can be defined similarly. It is well known that one-sided duo rings are McCoy, and we have already proved that a one-sided duo ring with identity is Dischinger. In view of this and Corollary \ref{D.lmccoy}, the following natural question arises, which we are not able to answer.
	\begin{question}
		Is a right or left McCoy ring with identity right Dischinger?
	\end{question} 
	Yao in \cite{Yao} defined the class of \emph{weakly left duo} rings as a generalization of left duo rings. In Corollary \ref{D.duo}, we proved that a weakly left duo ring with identity is right Dischinger. But we don't know the answer to the following question.
	\begin{question}
		Is a weakly left duo ring with identity left Dischinger?	
	\end{question}
	Since a weakly left (right) duo ring with identity is a quasi left (right) duo ring (see \cite[Proposition 2.2]{Yu}). A positive answer to the following question would also imply a positive answer to the preceding one.
	\begin{question}
		Is a left quasi-duo ring with identity a left Dischinger?	
	\end{question}

	\vspace{3mm}
	In Corollary \ref{D.acc}, we proved that $R$ satisfies the ascending chain condition on left annihilators of subsets of $R$. In view of Corollary \ref{D.goldie}, $_RR$ does not have a finite uniform dimension. This can also be seen directly in view of the following result.
	\begin{lemma}\label{D.uniform}
		Suppose $s,\,t \in S$ are two nonzero elements such that $Ss \cap St = 0$ and $l_S(t) = 0$, then the uniform dimension of $_SS$ is infinite.
	\end{lemma}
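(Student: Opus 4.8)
The plan is to convert the single independence relation $Ss \cap St = 0$ into an infinite one by iterating a fixed injective endomorphism of $_SS$. Since $l_S(t) = 0$, the map $\phi \colon S \to S$ given by $\phi(y) = yt$ is an injective homomorphism of left $S$-modules whose image is $St$; in particular $\phi$ sends a left ideal to a left ideal, with $\phi^i(Ss) = Sst^i$ and $\phi^i(St) = St^{i+1}$ for every $i \ge 0$ (writing $\phi^0$ for the identity, $Sst^0 := Ss$, and $St^0 := S$). An injective map preserves intersections, so $\phi^i$ carries the internal direct sum $Ss \oplus St \subseteq S$ onto the internal direct sum $Sst^i \oplus St^{i+1}$ inside $\phi^i(S) = St^i$; that is, $St^i \supseteq Sst^i \oplus St^{i+1}$ for every $i \ge 0$, the case $i = 0$ being exactly the hypothesis $Ss \cap St = 0$.

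Next I would prove by induction on $n \ge 1$ that
\[
\left(\bigoplus_{i=0}^{n-1} Sst^i\right) \oplus St^{n}
\]
is a direct sum inside $S$. For $n = 1$ this is just $Ss \oplus St$, i.e.\ the hypothesis. For the inductive step, given that the displayed sum is direct, I replace its last summand $St^n$ by the internal direct sum $Sst^n \oplus St^{n+1}$ produced in the previous paragraph; since refining one summand of a direct sum by a direct sum it contains keeps the whole sum direct, the claim follows for $n+1$. In particular every finite partial sum $\bigoplus_{i=0}^{n-1} Sst^i$ is a direct summand of the displayed direct sum, hence is itself direct, so $\sum_{i\ge 0} Sst^i$ is a direct sum of left ideals of $S$.

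Finally, each $Sst^i$ is nonzero: since $l_S(t) = 0$, right multiplication by $t$ is injective on $S$, so $s \ne 0$ forces $st \ne 0$, and inductively $st^i \ne 0$ for all $i$; thus $st^i$ is a nonzero element of $Sst^i$ (equivalently $Sst^i = \phi^i(Ss) \cong Ss \ne 0$). Hence $_SS$ contains the infinite independent family $\{Sst^i\}_{i \ge 0}$ of nonzero left ideals, so the uniform dimension of $_SS$ is infinite. I expect the only point requiring genuine care is the bookkeeping in the induction — verifying that substituting $Sst^n \oplus St^{n+1}$ for $St^n$ preserves directness of the entire sum — while the remaining ingredients (well-definedness and injectivity of $\phi$, the identities $\phi^i(Ss) = Sst^i$ and $\phi^i(St) = St^{i+1}$, and $st^i \ne 0$) are routine.
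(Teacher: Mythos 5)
Your proof is correct and is essentially the paper's argument in structural clothing: the paper exhibits the same independent family $Ss, Sst, Sst^2, \ldots$ and verifies independence by taking a relation $s_1s+s_2st+\cdots+s_nst^{n-1}=0$ and alternately using $Ss\cap St=0$ to kill the leading term and $l_S(t)=0$ to cancel a factor of $t$. Your injective endomorphism $\phi(y)=yt$ and the nested direct-sum induction are just a clean repackaging of those same two cancellation steps, so no further comparison is needed.
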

	\begin{proof}
		We will show that $Ss,\,Sst,\,Sst^2, \ldots$ is an independent family of nonzero left ideals of S. It is clear that each of these ideals is nonzero. Also $s_1s+s_2st+\ldots+s_nst^{n-1} = 0$ implies that $s_1s \in Ss \cap St = 0$. So $s_2st+\ldots+s_nst^{n-1} = 0$ and as $l_S(t) = 0$ we have $s_2s+\ldots+s_nst^{n-2} = 0$. Thus $s_2s \in Ss \cap St = 0$. Proceeding similarly, we get $s_kst^{k-1} = 0$ for all positive integers $k$. 
	\end{proof}
	\begin{cor}
		$_RR$ has infinite uniform dimension.
	\end{cor}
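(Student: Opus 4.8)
The plan is to apply Lemma~\ref{D.uniform} with $S = R$; thus it suffices to produce nonzero elements $s,t \in R$ with $Rs \cap Rt = 0$ and $l_R(t) = 0$. I would take $t = a$ and $s = 1 - ax$, noting that $s \neq 0$ since $1$ and $ax$ are distinct elements of the basis $\mathcal{B}$.

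First I would verify $l_R(a) = 0$: if $l_R(a) \neq 0$, then Corollary~\ref{D.lann} applied to $f = a$ yields $k \in \mathbb{N}$ with $a^{k}\cdot a = a^{k+1} = 0$, which is impossible because $a^{k+1}$ is a nonzero element of the basis $\mathcal{B}$.

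The crux is to show $R(1-ax) \cap Ra = 0$, which I would do by examining supports with respect to $\mathcal{B}$. On one hand, if $r$ is written in reduced form then $ra$ is already reduced and each of its monomials ends in the letter $a$; hence every element of $Ra$ is a linear combination of reduced monomials ending in $a$. On the other hand, for arbitrary $r \in R$ we have $r(1-ax) = r - rax$, and I would check monomial by monomial that $r - rax$ has no monomial ending in $a$ in its support. Indeed, if a reduced monomial $m$ of $r$ ends in $a$, then $m \cdot ax = m$ (appending $a$ produces a monomial ending in $a^{j}$ with $j \geq 2$, and then $a^{j}x = a^{j-1}$ reproduces $m$), so such an $m$ contributes $0$ to $r - rax$; while if $m$ ends in $x$ or $m = 1$, then both $m$ and $m \cdot ax$ end in $x$ (for $m = 1$ one has $m \cdot ax = ax$), so $m$ contributes only monomials ending in $x$ together with possibly the monomial $1$. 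Consequently $R(1-ax)$ lies in the linear span of $\{1\}$ together with all reduced monomials ending in $x$, and this span meets $Ra$ trivially.

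Having checked both hypotheses of Lemma~\ref{D.uniform}, we conclude that $_RR$ has infinite uniform dimension; concretely, $R(1-ax),\, R(1-ax)a,\, R(1-ax)a^{2}, \ldots$ is an independent family of nonzero left ideals. The only mildly delicate point is the support bookkeeping in the previous paragraph: one must confirm that appending first $a$ and then $x$ to an $a$-ending reduced monomial triggers exactly one application of $a = a^{2}x$ and returns the original monomial, and that no reduction can turn $m - max$ into something with an $a$-ending monomial when $m$ ends in $x$; both are immediate from the canonical form of elements of $\mathcal{B}$ described above.
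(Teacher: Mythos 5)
Your proposal is correct and follows exactly the paper's route: apply Lemma~\ref{D.uniform} with $s = 1-ax$ and $t = a$, the paper merely asserting $R(1-ax)\cap Ra = 0$ and $l_R(a)=0$ where you supply the (sound) support-bookkeeping and Corollary~\ref{D.lann} verifications.
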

	\begin{proof}
		Both $1-ax$ and $a$ are nonzero elements of $R$, $R(1-ax) \cap Ra = 0$ and $l_R(a) = 0$. So the result follows from Lemma \ref{D.uniform}. 
	\end{proof}
	The following question is natural to ask.
	\begin{question}
		Let $S$ be any ring and $_SS$ have finite uniform dimension. Is $S$ left or right Dischinger?
	\end{question}
\section{Acknowledgements}
We are thankful to Pace P. Nielsen for many interesting questions that he raised in private communication. We are also grateful to him for reading the paper and for many useful comments that improved the quality of the paper.

	\bigskip
	\noindent Department of Mathematics \\
	\noindent  Panjab University \\
	\noindent  Chandigarh 160\,014, India\\[2mm]
	{\tt dimple4goyal@gmail.com}\\[1mm]
	{\tt dkhurana@pu.ac.in}\\[1mm]

\end{document}